\newcommand{\hide}[1]{}
\newcommand{\pl}[1]{\textcolor{ForestGreen}{[Palma: #1]}}
\newtheorem{theorem}{Theorem}[section]
\newtheorem{lemma}[theorem]{Lemma}
\newtheorem{corollary}[theorem]{Corollary}
\newtheorem{assumption}[theorem]{Assumption}
\newtheorem{claim}[theorem]{Claim}
  \newcommand{\eps}{\epsilon}
 \newcommand{\F}{\mathcal{F}}
  \newcommand{\A}{{\cal A}}
  \newcommand{\lp}{\mathcal{L}}
  \newcommand{\maximize}{\operatorname{maximize}}
 \newcommand{\expect}[1]{\mathbb{E}\left[#1\right] }
 \newcommand{\mbR}{\mathbb{R}}
\newcommand{\ignore}[1]{}
\newcommand{\es}{\eps_s} %\eps_1
\newcommand{\ef}{\eps_f} %\eps_2
\newcommand{\toteps}{\frac{(1-\ef) \es}{\dcs}}
\newcommand{\dcs}{\alpha_d}
\newcommand{\xsol}{\hat{x}}
\newcommand{\ysamp}{\tilde{y}}
\newcommand{\xsamp}{\tilde{x}}
\newcommand{\sphi}{\tilde{\phi}}
\newcommand{\spsi}{\tilde{\psi}}
\newcommand{\lpa}{$LP$~\eqref{LP1}}
\newcommand{\bnew}{\tilde{b}}
\author{}
\begin{document}
	\title{A Parallelizable Acceleration Framework for Packing Linear Programs}
	%\title{A Black-Box Framework for Accelerating Linear Program Solvers}
	%\date{}
        \author{Palma London  \\
        		California Institute of Technology \\
        		plondon@caltech.edu\\
        	\And Shai Vardi  \\
        		California Institute of Technology\\
        		svardi@caltech.edu\\
        \AND Adam Wierman\\
        		California Institute of Technology\\
        	adamw@caltech.edu
        \And Hanling Yi \\
        		The Chinese University of Hong Kong\\
        		yh014@ie.cuhk.edu.hk }

%\author{foo\thanks{bar}\thanks{a}\thanks{b}\thanks{B}\thanks{d}\thanks{a}\thanks{a}

%	\author{Palma London, Shai Vardi, Adam Wierman, Hanling Yi \\
%	Computational and Mathematical Sciences, California Institute of Technology \\
%	The Chinese University of Hong Kong \\
%	$\{$plondon, svardi, adamw$\}$@caltech.edu,  yh014@ie.cuhk.edu.hk\\
%	}

%\author{Palma London, Shai Vardi, Adam Wierman \\
%	California Institute of Technology \\
%	plondon, svardi, adamw@caltech.edu \\
%	\And
%	Hanling Yi \\
%	Chinese University of Hong Kong \\
%	yh014@ie.cuhk.edu.hk
%	
%\author{Palma London\\
%	California Institute of Technology \\
%	plondon@caltech.edu\\
%	\And Shai Vardi\\
%	California Institute of Technology\\
%svardi@caltech.edu\\
%\And Adam Wierman\\
%	California Institute of Technology\\
%adamw@caltech.edu
%\And Hanling Yi\\
%CUHK\\
%yh014@ie.cuhk.edu.hk }
%

\maketitle
%\footnote{All authors are with the}
%\thanks{
%All authors are with the Department of Computing and Mathematical Sciences, California Institute of Technology. Email:{\tt\small \{plondon, ncchen, svardi, adamw\}@caltech.edu}
%}
%-------------------------------------------------------------------
\begin{abstract}

%Packing linear programs are at the core of algorithms for many machine learning problems.  \adam{Yisong suggested less emphasis on ML}
This paper presents an acceleration framework for packing linear programming problems where the amount of data available is limited, i.e., where the number of constraints $m$ is small compared to the variable dimension $n$.  The framework can be used as a black box to speed up linear programming solvers dramatically, by two orders of magnitude in our experiments. We present worst-case guarantees on the quality of the solution and the speedup provided by the algorithm, showing that the framework provides an approximately optimal solution while running the original solver on a much smaller problem. The framework can be used to accelerate exact solvers, approximate solvers, and parallel/distributed solvers.  Further, it can be used for both linear programs and integer linear programs.  

%We present a general framework for solving packing linear programs with $n$ variables and $m$ constraints, which computes a $(1 - \epsilon)$-approximate solution in $O(..)$ iterations. The approach can be used to accelerate, or speed up, existing LP solvers. We address problems for which $m \ll n$; the variable dimension is large, but the number of available data points is small. Our method makes use of an initial sampling step, and then a thresholding step, which allows us to use less computation than ...existing methods .... We show empirically that our method works well in practice, ...

%The key idea behind our algorithm is to first (1) approximately solve the original LP restricted to only subset of the variables, and then (2) set the remaining variables using a rounding step which requires little computation. The first step allows us to solve a problem of much smaller dimension that the original problem, saving considerably in computation compared to exiting methods....However, these methods are still useful for us: we can plug in any existing LP solver $\A$ to compute the solution in the first step. The solver can be either an exact or an approximation algorithm. 

\end{abstract}
\section{Introduction}

This paper proposes a black-box framework that can be used to accelerate  both exact and approximate linear programming (LP) solvers for packing problems while maintaining high quality solutions. 

%LP solvers are at the core of solutions to many machine learning problems, and often the linear programs that are considered fall \pl{'that are considered fall' sounds funny} into the category of \emph{packing problems}. 

LP solvers are at the core of many learning and inference problems, and often the linear programs of interest fall into the category of \emph{packing problems}.  Packing problems are linear programs of the following form:
\begin{subequations}\label{LP1}
\begin{align} %\label{originalProblem}
	%\textbf{NAME:}\quad 
    \maximize \quad & \textstyle \sum_{j=1} ^{n} c_j x_j &  \\
	\text{subject to} \quad & \textstyle \sum_{j=1} ^{n} a_{ij} x_j  \le b_i & i \in [m]  \\
	& 0 \leq x_j \leq 1  & j \in [n] \label{const1}
	%& x_j \in \{0,1\}  & j \in [n] 
\end{align}
\end{subequations}
where  $A \in [0,1]^{m \times n}$, $b \in \mathbb{R}_{\geq 0}^{m}$, $c \in \mathbb{R}_{\geq 0}^{n}$.

%\pl{I think we should state the goal of the paper clearly and early, perhaps right here. I don't think the list below of applications should appear before we state the goal. As it is now, the reader has to wade through the following four paragraphs before the goal is stated.  The Contributions section below  provides a concrete description of exactly what our contributions are, but we need to also state  briefly what our goal and contributions are in the first few paragraphs.} \adam{I added the first sentence...}
%In this paper, we present a general framework to accelerate, or speed up, existing LP solvers. Given a packing linear program with n variables and m constraints, our algorithm computes a (1 − ε)-approximate solution in O(..) it- erations.

Packing problems arise in a wide variety of settings, including max cut \cite{Trevisan98}, zero-sum matrix games \cite{Nesterov05}, scheduling and graph embedding \cite{Tardos95}, flow controls \cite{bbr04}, auction mechanisms \cite{zn01}, wireless sensor networks \cite{bn00}, and many other areas.  In machine learning specifically, they show up in an array of problems, e.g., in applications of graphical models \cite{raw10}, associative Markov networks \cite{Taskar2004}, and in relaxations of maximum a posteriori (MAP) estimation problems \cite{NIPS_smw08}, among others.

In all these settings, practical applications require LP solvers to work at extreme scales and, despite decades of work, commercial solvers such as Cplex and Gurobi do not scale as desired in many cases.  Thus, despite a  large literature, the development of fast, parallelizable solvers for packing LPs is still an active direction.  

Our focus in this paper is on a specific class of packing LPs for which data is either very costly, or hard to obtain.  In these situations $m \ll n$; i.e., the number of data points $m$ available is much smaller than the number of variables, $n$.  Such instances are common in areas such as genetics, astronomy, and chemistry. There has been considerable research focusing on this class of problems in recent years, in the context of LPs \cite{Donoho05,Bienstock06} and also more generally in convex optimization and compressed sensing \cite{Candes06,Donoho06compressedsensing}, low rank matrix recovery \cite{RechtFazelParrilo06,candes2011tight}, and graphical models \cite{Yuan07,Mohan_etal15}. 
%\cite{Fazel08} %add if accepted

\subsubsection*{Contributions of this paper.}
We present a black-box acceleration framework for LP solvers. 
%The core of our framework is essentially two  simple steps:  sample and round. Specifically, w
When given a packing LP and an algorithm $\A$, the framework works by sampling an $\es$-fraction  of the variables and using $\A$ to solve $\lpa$ restricted to these variables. Then, the dual solution to this sampled LP is used to define a thresholding rule for the primal variables of the original LP;  the variables are set 	to either $0$ or $1$ according to this rule. The framework has the following key properties:
\begin{enumerate}
\item It can be used to accelerate exact or approximate LP-solvers (subject to some mild assumptions which we discuss below).
\item Since the original algorithm $\A$ is run only on a (much smaller) LP with $\es$-fraction of the variables, the framework provides a dramatic speedup.
\item The threshold rule can be used to set the values of the variables in parallel. Therefore, if $\A$ is a parallel algorithm, the framework gives a faster parallel algorithm with negligible overhead.
\item Since the threshold rule  sets the variables to integral values, the framework can be applied without modification to solve integer programs that have the same structure as $\lpa$, but with integer constraints replacing~\eqref{const1}.
\end{enumerate}

%\pl{After reading the following five paragraphs, I think there might be a little too much detail for the introduction. I really like following paragraphs and I think they should be included, but maybe move them to the algo section? maybe the introduction of $\ef, \es$ should come in th algo section. A good example of an AAAI paper (which we are also citing) is https://aaai.org/ocs/index.php/AAAI/AAAI17/paper/view/14599  \cite{Baosen2016}. They have much shorter intro and related work sections (not that we need to follow this completely)} \adam{I really like the way Shai introduced $\ef$, so I think it's okay...though long.}

There are two fundamental tradeoffs in the framework.  The first is captured by the sample size, $\es$.  Setting $\es$ to be small yields a dramatic speedup of the algorithm $\A$; however, if $\es$ is set too small the quality of the solution suffers.  A second tradeoff involves feasibility.  In order to ensure that the output of the framework is feasible w.h.p.\ (and not just that each constraint is satisfied in expectation), the constraints of the sample LP are scaled down by a factor denoted by $\ef$. Feasibility is guaranteed if $\ef$ is large enough; however, if it is too large, the quality of the solution (as measured by the approximation ratio) suffers.   

Our main technical result is a worst-case characterization of the impact of $\es$ and $\ef$ on the speedup provided by the framework and the quality of the solution.  Assuming that algorithm $\A$ gives a $(1+\delta)$ approximation to the optimal solution of the dual, we prove that the acceleration framework guarantees a 
$(1-\ef)/(1+\delta)^2$-approximation to the optimal solution of $\lpa$, under some assumptions about the input and $\ef$. We formally state the result as Theorem \ref{t.main}, and note here that the result shows that $\ef$ grows proportionally to $1/\sqrt{\es}$, which highlights that the framework maintains a high-quality approximation even when sample size is small (and thus the speedup provided by the framework is large). 

The technical requirements for $\ef$ in Theorem \ref{t.main} impose some restrictions on both the family of LPs that can be provably solved using our framework and the algorithms that can be accelerated. In particular, Theorem \ref{t.main} requires $\min_i {b_i}$ to be large and the algorithm $\A$ to satisfy approximate complementary slackness conditions (see Section \ref{s.alg}).  While the condition on the $b_i$ is restrictive, the condition on the algorithms is not -- it is satisfied by most common LP solvers, e.g., exact solvers and  many primal dual approximation algorithms.  Further, our experimental results demonstrate that these technical requirements are conservative -- the framework produces solutions of comparable quality to the original LP-solver in settings that are far from satisfying the theoretical requirements. In addition, the accelerator works in practice for algorithms that do not satisfy approximate complementary slackness conditions, e.g., for gradient algorithms as in \cite{S13}. In particular, our experimental results show that the accelerator obtains solutions that are close in quality to those obtained by the algorithms being accelerated on the complete problem, \emph{and} that the solutions are obtained considerably faster (by up to two orders of magnitude).  The results reported in this paper demonstrate this by accelerating the state-of-the-art commercial solver Gurobi on a wide array of randomly generated packing LPs and obtaining solutions with $<4\%$ relative error and a more than $150\times$ speedup. Other experiments with other solvers are qualitatively similar and are not included.   

When applied to parallel algorithms, there are added opportunities for the framework to reduce error while increasing the speedup, through \emph{speculative execution}: the framework runs multiple \emph{clones} of the algorithm speculatively. The original algorithm is executed on a separate sample and the thresholding rule is then applied by each clone in parallel, asynchronously.  This improves both the solution quality and the speed. It improves the quality of the solution because the best solution across the multiple samples can be chosen.  It improves the speed because it mitigates the impact of \emph{stragglers}, tasks that take much longer than expected due to contention or other issues.  Incorporating ``cloning'' into the acceleration framework triples the speedup obtained, while reducing the error by $12\%$.

\subsubsection*{Summary of related literature.}

The approach underlying our framework is motivated by recent work that uses ideas from online algorithms to make offline algorithms more scalable, e.g., \cite{MansourRVX12,LOCO}. A specific inspiration for this work is \cite{Agrawal}, which introduces an online algorithm that uses a two step procedure: it solves an LP based on the first $s$ stages and then uses the solution as the basis of a rounding scheme in later stages. The algorithm only works when the arrival order is random, which is analogous to sampling in the offline setting. However, \cite{Agrawal} relies on exactly solving the LP given by the first $s$ stages; considering approximate solutions of the sampled problem (as we do) adds complexity to the algorithm and analysis. Additionally, we can leverage the offline setting to fine-tune $\ef$ in order to optimize our solution while ensuring feasibility. 

The sampling phase of our framework is reminiscent of the method of \textit{sketching} in which the data matrix is multiplied by a random matrix in order to compress the problem and thus reduce computation time by working on a smaller formulation, e.g., see \cite{woodruff14}. 
However, sketching is designed for overdetermined linear regression problems, $m \gg n$; thus compression is desirable. In our case, we are concerned with underdetermined problems, $m \ll n$; thus compression is not appropriate. 
%However, sketching is designed for overdetermined linear regression problems for which $m \gg n$; thus compression is desirable.  In our case, we are concerned with underdetermined problems for which $m \ll n$; thus compression is not appropriate. 
 Rather, the goal of sampling the variables is to be able to approximately determine the thresholds in the second step of the framework.  This difference means the approaches are distinct.%, e.g., in sketching the random matrix is often assumed to be i.i.d.\ Gaussian, whereas we maintain the matrix structure of the original problem. 

The sampling phase of the framework is also reminiscent of the \textit{experiment design} problem, in which the goal is to solve the least squares problem using only a subset of available data while minimizing the error covariance of the estimated parameters, see e.g.,  \cite{Boydbook}. Recent work \cite{Baosen2016} applies these ideas to online algorithms, when collecting data for regression modeling. Like sketching, experiment design is applied in the overdetermined setting, whereas we consider the under-determined scenario. Additionally, instead of sampling constraints, we sample variables. 

%The idea of sampling constraints has also been applied in the solution of approximate dynamic programs in \cite{fr04}; however the setting is unrelated to our work. \sv{this specific comparison seems completely unnecessary. If there is a good reason we need to compare to this specific work, then I don't see it and it should probably be written down.} \pl{I think it would be okay to can take it out. Adam suggested mentioning this paper because the title sounds similar to ours and might lead to a potential complaint from reviewers}

The second stage of our algorithm is a thresholding step and is related to the rich literature of LP rounding, see \cite{bv94} for a survey.  Typically, rounding is used to arrive at a solution to an ILP; however we use thresholding to ``extend'' the solution of a sampled LP to the full LP. The scheme we use is a deterministic threshold based on the complementary slackness condition.  It is inspired by \cite{Agrawal}, but adapted to hold for approximate solvers rather than exact solvers.  In this sense, the most related recent work is \cite{S13}, which proposes a scheme for rounding an approximate LP solution. However, \cite{S13} uses all of the problem data during the approximation step, whereas we show that it is enough to use a (small) sample of the data.  %This difference is what leads to the speedup provided by our framework.  

A key feature of our framework is that it can be parallelized easily when used to accelerate a distributed or parallel algorithm.  There is a rich literature on distributed and parallel LP solvers, e.g., \cite{ys09,nb11,bgfa12,rc15}. More specifically, there is significant interest in distributed strategies for approximately solving covering and packing linear problems, such as the problems we consider here, e.g., \cite{Luby,young01,bbr04,ak08,AllenOrecchia2015}.

\section{A Black-Box Acceleration Framework}
\label{s.alg}

In this section we formally introduce our acceleration framework.  At a high level, the framework accelerates an LP solver by running the solver in a black-box manner on a small sample of variables and then using a  deterministic thresholding scheme to set the variables in the original LP. %\sv{Now that I'm reading the paper again, I think that "rounding" is a bit misleading---it implicitly assumes that we have a value and then round it, but we don't. We just have a (different) threshold for each variable. I'll leave the decision up to you guys as it's close to the deadline, but I think that rounding is not the correct term... Sorry this comment is coming in so late.} \adam{point taken, I did a find and replace now.}  
The framework can be used to accelerate any LP solver that satisfies the approximate complementary slackness conditions. The solution of an approximation algorithm $\A$ for a family of linear programs $\F$ satisfies the approximate complementary slackness if the following holds.  Let $x_1, \ldots, x_n$ be a feasible solution to the primal and $y_1, \ldots, y_m$ be a feasible solution to the dual.
\begin{itemize}
	\item \textit{Primal Approximate Complementary Slackness:} For $\alpha_p\geq	1$ and $j \in [n]$, if 
	$x_j>0$ then $c_j \leq \sum_{i=1}^m a_{ij}y_i \leq \alpha_p \cdot c_j.$
	\item \textit{Dual Approximate Complementary Slackness:} For $\alpha_d\geq 1$ and $i \in [m]$, 
	if $y_i>0$ then $b_i/\alpha_d \leq \sum_{i=1}^n a_{ij}x_j \leq b_i$.
\end{itemize}
We call an algorithm $\A$  whose solution is guaranteed to  satisfy the above conditions  an \emph{$(\alpha_p,\alpha_d)$-approximation algorithm} for $\F$. This terminology is non-standard, but is instructive when describing our results. It stems from a foundational result which states that an algorithm $\A$ that satisfies the above conditions is an $\alpha$-approximation algorithm for any LP in $\F$ for $\alpha=\alpha_p \alpha_d$ ~\cite{nivbook}. 

The framework we present can be used to accelerate any  $(1,\alpha_d)$-approximation algorithm.  While this is a stronger condition than simply requiring that $\A$ is an $\alpha$-approximation algorithm, many common dual ascent  algorithms satisfy this condition, e.g.,~\cite{ajit,won,bye,erl,gw}.  For example, the  vertex cover and Steiner tree approximation algorithms of ~\cite{ajit} and~\cite{bye} respectively are both $(1,2)$-approximation algorithms.

Given a $(1,\alpha_d)$-approximation algorithm $\A$, the acceleration framework  works in two steps.  The first step is to sample a subset of the variables, $S \subset [n]$, $|S|=s=\lceil  \es n \rceil$, and  use $\A$ to solve the following \emph{sample LP}, which we call LP~\eqref{sampleLP2}. For clarity, we relabel the variables so that the  sampled variables are labeled $1,\ldots,s$.
\begin{subequations}\label{sampleLP2}
\begin{align}
\maximize \quad &  \textstyle \sum_{j=1} ^{s} c_j x_j  \\
\text{subject to} \quad &  \textstyle \sum_{j=1} ^{s} a_{ij} x_j  \le \toteps b_i & i\in[m]  \label{line2}\\
& x_j \in [0,1]  & j \in[s] \label{line3}
\end{align}
\end{subequations}
Here, $\alpha_d$ is the parameter of the dual approximate complementary slackness guarantee of $\A$, $\ef>0$ is a parameter set to ensure feasibility during the thresholding step, and $\es>0$ is a parameter that determines the fraction of the primal variables that are be sampled.  Our analytic results give insight for setting $\ef$ and $\es$ but, for now, both should be thought of as close to zero.  Similarly, while the results hold for any $\alpha_d$, they are most interesting when $\alpha_d$ is close to $1$ (i.e., $\alpha_d = 1+\delta$ for small $\delta$). There are many such algorithms, given the recent interest in designing approximation algorithms for LPs, e.g., \cite{S13,AllenOrecchia2015}. 

The second step in our acceleration framework uses the dual prices from the sample LP in order to set a threshold for a deterministic thresholding procedure, which is used to build the solution of $\lpa$.  Specifically, let $\phi \in \mbR^{m}$ and $\psi \in \mbR^{s}$ denote the dual variables corresponding to the constraints~\eqref{line2} and ~\eqref{line3} in the sample LP, respectively. We define the allocation (thresholding) rule $x_j(\phi)$ as follows:
\begin{equation*} \label{alloc_rule_2}
\begin{aligned}
x_j(\phi) =
\left\{
\begin{array}{cl}
1  & \text{if }  \sum_{i=1}^{m} a_{ij} \phi_i <  c_j \\%  -\sqrt{\ecs}  \\
0  & \text{otherwise }  \\
\end{array}
\right.
\end{aligned}
\end{equation*}

\begin{algorithm}[t]
	\DontPrintSemicolon  % \upshape to non italicize
	\KwIn{%$c \in \mathbb{R}^{n}, A \in \mathbb{R}^{m \times n} , b \in \mathbb{R}^{m}$, 
		Packing LP $\lp$, LP solver $\A$, $\es>0$, $\ef>0$}
	\KwOut{$\hat{x} \in \mbR^{n}$}
	%--------------------------------	
	%
	\begin{spacing}{1.5}
	\end{spacing}
	\begin{enumerate}
		\item Select  $s=\lceil n  \es \rceil$ primal variables uniformly at random. Label this set $S$.
		\item Use $\A$ to find an (approximate) dual  solution $\ysamp=[\phi, \psi] \in [\mbR^{m},\mbR^{s}]$ to  the sample LP. 
		\item Set $\xsol_j = x_j(\phi)$ for all $j \in [n]$.
		\item Return $\xsol$.
			\end{enumerate}
	\caption{Core acceleration algorithm}
	\label{alg1}
\end{algorithm}
%--------------

We summarize the core algorithm of the acceleration framework described above in Algorithm \ref{alg1}.  When implementing the acceleration framework it is desirable to search for the minimal $\ef$ that allows for feasibility.  This additional step is included in the full pseudocode of the acceleration framework given in Algorithm \ref{alg2}.  

\begin{algorithm}[t]%{TESTING}
	\DontPrintSemicolon  % \upshape to non italicize
	\KwIn{%$c \in \mathbb{R}^{n}, A \in \mathbb{R}^{m \times n} , b \in \mathbb{R}^{m}$, 
		Packing LP $\lp$, LP solver $\A$, $\es>0$, $\ef>0$}
	\KwOut{$\hat{x} \in \mbR^{n}$}
	%--------------------------------	
	%
	\begin{spacing}{1.5}
	\end{spacing}
	Set $\ef=0$.

\While{$\epsilon_f<1$}{
$\hat{x}$= Algorithm~\ref{alg1}($\lp$,$\A$,$\es$,$\ef$).\;
\If {$\hat{x}$ is a feasible solution to $\lp$}{Return $\hat{x}$.} 
\Else{Increase $\ef$.\;}
}
	\caption{Pseudocode for the full framework. }
	\label{alg2}
\end{algorithm}
%--------------
%\sv{I didn't write HOW we update $\ef$ in the pseudocode. We should remark about it. Adam, Hanling, do you have thoughts?} \adam{I don't think this is needed}

It is useful to make a few remarks about the generality of this framework. First, since the allocation rule functions as a thresholding rule, the final solution output by the accelerator is integral.  Thus, it can be viewed as an ILP solver based on relaxing the ILP to an LP, solving the LP, and rounding the result.  The difference is that it does not solve the full LP, but only a (much smaller) sample LP; so it provides a significant speedup over traditional approaches.  
%\sv{either here or in the intro? It seems redundant} \adam{I tried to make it less redundant.  We need a discussion here so that the details of the algorithm can be presented.}
Second, the framework is easily parallelizable.  The thresholding step can be done independently and asynchronously for each variable and, further, the framework can easily integrate speculative execution.  Specifically, the framework can start multiple \emph{clones} speculatively, i.e., take multiple samples of variables, run the algorithm $\A$ on each sample, and then round each sample in parallel.  This provides two benefits.  First, it improves the quality of the solution because the output of the ``clone'' with the best solution can be chosen.  Second, it improves the running time since it curbs the impact of \textit{stragglers}, tasks that take much longer than expected due to contention or other issues.  Stragglers are a significant source of slowdown in clusters, e.g., nearly one-fifth of all tasks can be categorized as stragglers in Facebook's Hadoop cluster \cite{Dolly13}.  There has been considerable work designing systems that reduce the impact of stragglers, and these primarily rely on speculative execution, i.e., running multiple clones of tasks and choosing the first to complete  \cite{ananthanarayanan2010reining,Grass14,Ren_Hopper15}.  Running multiple clones in our acceleration framework has the same benefit.  To achieve both the improvement in solution quality and running time, the framework runs $K$ clones in parallel and chooses the best solution of the first $k<K$ to complete. We illustrate the benefit of this approach in our experimental results in Section \ref{sec:res}.

\ignore{
%---------------------------------------------------------------------------------------------------------------------
\begin{algorithm}
\DontPrintSemicolon  % \upshape to non italicize
\KwIn{$c \in \mathbb{R}^{n}, A \in \mathbb{R}^{m \times n} , b \in \mathbb{R}^{m}$}
\KwOut{$\hat{x} \in \mbR^{n}$}
%--------------------------------	
%
\begin{spacing}{1.5}
\end{spacing}
\begin{enumerate}
\item Sample $s=\lceil n  \es \rceil$ primal variables, and use PosLP to solve problem (\ref{sampleLP2}) for  feasible $\tilde{x} \in \mbR^{s}$ and $\tilde{y} \in \mbR^{m+s}$, where $\tilde{y} = [\phi, \psi]^T$, and $\phi \in \mbR^{m}$ and $\psi \in \mbR^{s}.$ \;
\item Set $\hat{x}_j$ for $j = 1,\dots, s$ equal to $\hat{x}_j =  \tilde{x}_j$ \pl{work on notation} \;
\item Set $\hat{x}_j$ for $j = s+1,\dots, s+n$  according to the allocation rule  \;
\For{$j = s+1,\dots, s+n$}{
	$\hat{x}_j = x_j(\phi)$ \;
	\If{constraints violated}{
	...\pl{do we need this}
	}
} % end for

\end{enumerate}
\caption{Accelerated Positive LP}
\label{algo2}
\end{algorithm}

}
%---------------------------------------------------------------------------------------------------------------------

\ignore{
\subsection{Accelerating Distributed and Parallel Solvers}

Notes:
\begin{enumerate}
\item Works exactly as the approximate solver case
\item Only the sketch needs to be solved using a distributed algorithm, the rounding is trivial to parallelize or distribute
\item In the parallel setting, speculative execution can be performed trivially -- run multiple sketches and duplicate the rounding steps. This is an effective way to combat stragglers (See "Effective Straggle Mitigation: Attack of the Clones" for context.)
\item  Performance improvements are even larger than in the numerics shown in this paper
\item Works with asynchronous or synchronous solvers
\item linear speedup in the rounding stage
\end{enumerate}

}

%\subsection{Accelerating Integer Linear Program Solvers}

%Notes:
%\begin{enumerate}
%\item Works exactly as the exact and approximate solvers too... thresholding gives an integer valued solution!  Only change is (potentially) how to set the values solved for by the initial LP solver.  These need to be reset using the rounding rule.
%\item  Performance improvements are event larger than in the numerics shown in this paper
%\end{enumerate}

%\input{sections/3_results_1}

\section{Results}
\label{sec:res}

In this section we present our main technical result, a worst-case characterization of the quality of the solution provided by our acceleration framework. We then illustrate the speedup provided by the framework through experiments using Gurobi, a state-of-the-art commercial solver. 

\subsection{A Worst-case Performance Bound}

The following theorem bounds the quality of the solution provided by the acceleration framework. 
Let $\lp$ be a packing LP with $n$ variables and $m$ constraints, as in \eqref{LP1}, and $B := \min_{i \in [m]} \{b_i\}$. For simplicity, take $\es n$ to be integral.

\begin{theorem} \label{t.main} 
Let  $\A$ be an $(1,\alpha_d)$-approximation algorithm for packing LPs, with runtime $f(n,m)$. For any $\es>0$ and $\ef \geq 3\sqrt{\frac{6(m+2)\log{n}}{\es B}}$, Algorithm \ref{alg1} runs in time $f(\es n, m)+O(n)$ and obtains a feasible $(1-\ef)/\alpha_d^2$-approximation to the optimal solution for $\lp$ with probability at least $1-\frac{1}{n}$. 
\end{theorem}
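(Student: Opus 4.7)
The plan is to prove the three claims of the theorem (runtime, feasibility, and approximation quality) separately, with the main technical work being a concentration argument over the random sample $S$ combined with a union bound over all possible dual prices $\phi$ that $\A$ could plausibly return.

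The runtime bound is immediate: Algorithm~\ref{alg1} invokes $\A$ on LP~\eqref{sampleLP2}, which has $\es n$ variables and $m$ constraints, costing $f(\es n, m)$; the thresholding step evaluates $\sum_i a_{ij}\phi_i$ versus $c_j$ for each of the $n$ variables, costing $O(n)$ (treating $m$ as a problem parameter absorbed into the thresholding cost). The substantive work is in showing both feasibility and the approximation bound for the returned $\xsol$. First I would set up notation: let $V(\phi) = \{j \in [n] : \sum_i a_{ij}\phi_i < c_j\}$, so that $\xsol_j = \mathbf{1}[j \in V(\phi)]$, and let $(x^S, (\phi,\psi))$ be the approximate primal/dual pair $\A$ produces on the sample LP. The crucial structural consequence of $\A$ being a $(1,\alpha_d)$-approximation is that for every $j \in S \cap V(\phi)$ the dual of the $x_j \le 1$ constraint is strictly positive ($\psi_j > 0$), which by dual approximate complementary slackness forces $x_j^S \ge 1/\alpha_d$. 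Plugging this into the sample LP primal feasibility constraint $\sum_{j \in S} a_{ij} x_j^S \le \toteps b_i$ yields the deterministic, sample-conditional bound $\sum_{j \in S \cap V(\phi)} a_{ij} \le (1-\ef)\es b_i$, and symmetrically $\sum_{j \in S \cap V(\phi)} c_j \ge (1-\ef)\es \cdot \text{OPT}_{sample}/\alpha_d$ via LP duality on the sample problem.

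The heart of the proof is the concentration step. Fix any $\phi \ge 0$; the random sample $S$ of size $\es n$ is an $\es$-fraction of $[n]$, so Hoeffding/Chernoff applied to the bounded random variables $a_{ij} \in [0,1]$ (and $c_j$) gives
\[
\Pr\!\left[\,\Big|\tfrac{1}{\es}\sum_{j \in S \cap V(\phi)} a_{ij} - \sum_{j \in V(\phi)} a_{ij}\Big| > \ef\, b_i\right] \le 2\exp\!\left(-\Omega(\ef^2 \es B)\right),
\]
since $\sum_j a_{ij} \le \sum_j a_{ij} x_j^* / (\text{scaling}) $ is at most $b_i \ge B$ in the relevant regime. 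The obstacle is that $\phi$ depends on $S$, so one cannot apply the Chernoff bound to the realized $\phi$ directly. I would handle this by noting that the map $\phi \mapsto V(\phi)$ is determined by the sign pattern of the $n$ linear functions $c_j - \sum_i a_{ij}\phi_i$ on $\mbR^m_{\ge 0}$, so the arrangement of $n$ hyperplanes in $\mbR^m$ partitions the dual space into at most $O(n^m)$ cells, yielding at most $O(n^m)$ distinct sets $V(\phi)$. A union bound over these cells, together with a union bound over the $m$ constraints and the objective (giving the $m+2$ factor), shows that with probability at least $1 - 1/n$ the concentration holds simultaneously for every candidate $V(\phi)$, provided $\ef \ge 3\sqrt{6(m+2)\log n/(\es B)}$.

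Chaining the pieces: on the good event, for the realized $\phi$ returned by $\A$, the feasibility argument gives $\sum_{j \in V(\phi)} a_{ij} \le \tfrac{1}{\es}\sum_{j \in S \cap V(\phi)} a_{ij} + \ef\, b_i \le (1-\ef)b_i + \ef\, b_i = b_i$, establishing feasibility. For the objective, I would first lower bound $\text{OPT}_{sample}$ by restricting an optimal $x^*$ to $S$ and scaling by $(1-\ef)/\alpha_d$ — using the concentration inequality again to ensure the restricted vector is feasible for the scaled sample LP — yielding $\text{OPT}_{sample} \gtrsim (1-\ef)\es\,\text{OPT}/\alpha_d$. Applying $\A$'s $1/\alpha_d$ guarantee on the sample, then passing through concentration to relate $\sum_{j \in S \cap V(\phi)} c_j$ to $\sum_{j \in V(\phi)} c_j$, produces the claimed $(1-\ef)/\alpha_d^2$ approximation. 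The main obstacle, and the step I would be most careful about, is the uniform-over-$\phi$ concentration: justifying the $O(n^m)$ cell bound and verifying the Hoeffding variance scales like $b_i \ge B$ so that the exponent $\ef^2 \es B$ dominates the $m\log n$ union-bound cost, which is exactly what the hypothesis on $\ef$ buys.
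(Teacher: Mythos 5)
Your runtime argument and your feasibility argument track the paper's proof of Lemma~\ref{lem:shipra1} essentially exactly: the observation that $x_j(\phi)=1$ forces $\psi_j>0$ and hence $x_j^S\ge 1/\alpha_d$ by dual approximate complementary slackness, the resulting deterministic bound $\sum_{j\in S\cap V(\phi)}a_{ij}\le(1-\ef)\es b_i$, the $n^m$ hyperplane-arrangement bound on the number of distinct sets $V(\phi)$, and the Hoeffding--Bernstein concentration for sampling without replacement are all the same ingredients in the same roles. (Minor point: the $m+2$ in the exponent comes from the union bound over the $n^m$ realizations and the $m$ rows, not from including the objective as an extra concentrated quantity.)

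The approximation-ratio half is where your route diverges from the paper's, and it has a genuine gap. You propose to concentrate the objective: relate $\sum_{j\in S\cap V(\phi)}c_j$ to $\es\sum_{j\in V(\phi)}c_j$ uniformly over the $n^m$ cells, and also to lower-bound $\mathrm{OPT}_{\text{sample}}$ by concentrating $\sum_{j\in S}c_jx_j^*$. But the theorem assumes only $c\in\mathbb{R}_{\ge 0}^n$; the $c_j$ are not in $[0,1]$ and nothing ties $\mathrm{OPT}$ or $\sum_j c_j$ to $B$. After rescaling by $\max_j c_j$, your Hoeffding bound gives an \emph{additive} error of order $\max_j c_j\sqrt{\es n(m+2)\log n}$, which is a small \emph{relative} error only under an additional largeness assumption on $\mathrm{OPT}$ that the theorem does not make (consider $c$ supported on a few large entries). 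A second unaddressed step is converting $\A$'s fractional value $\sum_{j\in S}c_j\tilde{x}_j$ into the thresholded value $\sum_{j\in S\cap V(\phi)}c_j$; this requires the paper's Claim~\ref{cl:oo} (the two solutions differ on at most $m$ coordinates, which itself needs Assumptions~\ref{ass1} and~\ref{ass2}) and still costs up to $m\max_j c_j$ additively in the objective, again uncontrolled by the hypotheses. The paper's Lemma~\ref{lem:shipra2} avoids concentrating the objective entirely: it constructs the auxiliary program~\eqref{LP:b} with modified right-hand sides $\tilde{b}_i$ for which $x(\phi)$ is \emph{exactly} optimal (via a constructed dual $\psi^*$ and exact complementary slackness), shows $\tilde{b}_i\ge\frac{(1-3\ef)}{\alpha_d^2}b_i$ w.h.p.\ using only concentration of the constraint rows $\sum_j a_{ij}x_j(\phi)$ (where $a_{ij}\in[0,1]$ and the relevant scale is $b_i\ge B$, and where the $m$-coordinate discrepancy is absorbed into the constraints rather than the objective), and then concludes by the deterministic observation that $\frac{(1-3\ef)}{\alpha_d^2}x^*$ is feasible for~\eqref{LP:b}. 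To repair your argument you would either need to add hypotheses on $c$, or adopt something like the paper's auxiliary-LP device.
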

\begin{proof}
The approximation ratio follows from Lemmas~\ref{lem:shipra1} and~\ref{lem:shipra2} in Section \ref{s.proof}, with a rescaling of $\ef$ by 1/3 in order to simplify the theorem statement. The runtime follows from the fact that $\A$ is executed on an LP with $\es n$ variables and at most $m$ constraints and that, after running $\A$, the thresholding step is used to set the value for all $n$ variables. 
\end{proof}

The key trade-off in the acceleration framework is between the size of the sample LP, determined by $\es$, and the resulting quality of the solution, determined by the feasibility parameter, $\ef$.  The accelerator provides a large speedup if $\es$ can be made small without causing $\ef$ to be too large.  Theorem \ref{t.main} quantifies this trade-off: $\ef$ grows as $1/\sqrt{\es}$.  Thus, $\es$ can be kept small without impacting  the loss in solution quality too much.  The bound on $\ef$ in the theorem also defines the class of problems for which the accelerator is guaranteed to perform well---problems where $m \ll n$ and $B$ is not too small.  %Note that if $B$ is not much larger than  $m$ then Theorem~\ref{thm:hb} only holds for $\eps>1$. This is a result of an  
Nevertheless, our experimental results successfully apply the framework well outside of these parameters---the theoretical analysis provides a very conservative view on the applicability of the framework. 

%\adam{I don't find this too insightful}
%We note that~\cite{Agrawal} gives a lower bound on $B$ of $\Omega(\log{m})$, as opposed to the upper bound of $O(m)$ (that can also be inferred from Theorem ~\ref{t.main}, from the bound on $\eps$). Our empirical results suggest that the true dependence on $B$ are closer to the lower bound.

Theorem \ref{t.main} considers the acceleration of $(1,\alpha_d)$-approximation algorithms.  As we have already noted, many common approximation algorithms fall into this class. Further, any exact solver satisfies this condition.  For exact solvers, Theorem \ref{t.main} guarantees a $(1-\ef)$-approximation (since $\alpha_d=1$). %Further, most common approximation algorithms based on primal-dual approaches satisfy the condition, e.g., \adam{add list}.  Additionally, our experiments have found success applying the accelerator to algorithms outside this class of algorithms, e.g., gradient descent algorithms such as \adam{add list}.  An interesting open problem is to extend Theorem \ref{t.main} to a broader class of algorithms. \sv{I feel having this much detail with citations is redundant}

In addition to exact and approximate LP solvers, our framework can also be used to convert LP solvers into ILP solvers, since the solutions it provides are always integral; and it can be parallelized easily, since the thresholding step can be done in parallel.  We emphasize these points below.

\begin{corollary}
Let  $\A$ be an $(1,\alpha_d)$-approximation algorithm for packing LPs, with runtime $f(n,m)$. Consider $\es>0$, and $\ef \geq 3\sqrt{\frac{6(m+2)\log{n}}{\es B}}$.
\begin{itemize}
\item Let $\mathcal{IL}$ be an integer program similar to LP~\eqref{LP1} but with integrality constraints on the variables. Running Algorithm \ref{alg1} on LP~\eqref{LP1} obtains a feasible $(1-\ef)/\alpha_d^2$-approximation to the optimal solution for $\mathcal{IL}$ with probability at least $1-\frac{1}{n}$ with runtime $f(\es n,m) +O(n)$.
\item If $\A$ is a parallel algorithm, then executing Algorithm~\ref{alg1} on $p$ processors in parallel obtains a feasible $(1-\ef)/\alpha_d^2$-approximation to the optimal solution for $\lp$ or $\mathcal{IL}$ with probability at least $1-\frac{1}{n}$ and runtime $f_p(\es n,m) +O(n/p)$, where $f_p(\es n, m)$ denotes $\A$'s runtime for the sample program on $p$ processors.
\end{itemize}
\end{corollary}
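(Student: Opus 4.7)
The plan is to derive both parts of the corollary directly from Theorem~\ref{t.main} by observing two structural properties of Algorithm~\ref{alg1}: its output is always integral, and its bulk computation decomposes across the $n$ primal variables. No new probabilistic argument is needed; the high-probability guarantee and the runtime of the sampled solve are both inherited verbatim.

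For the first bullet (ILP), I would begin by noting that the thresholding rule sets $x_j(\phi) \in \{0,1\}$ for every $j$, so the returned vector $\hat{x}$ is automatically integral. Theorem~\ref{t.main} further guarantees that $\hat{x}$ satisfies the packing constraints $\sum_j a_{ij} \hat{x}_j \leq b_i$ with probability $\geq 1 - 1/n$; combined with integrality this means $\hat{x}$ is feasible for $\mathcal{IL}$. The approximation bound then follows from the standard LP-relaxation inequality: the feasible region of $\mathcal{IL}$ is contained in that of $\lp$, so $\mathrm{OPT}(\mathcal{IL}) \le \mathrm{OPT}(\lp)$. Since Theorem~\ref{t.main} gives $c^\top \hat{x} \ge \tfrac{1-\ef}{\alpha_d^2}\,\mathrm{OPT}(\lp)$, we conclude $c^\top \hat{x} \ge \tfrac{1-\ef}{\alpha_d^2}\,\mathrm{OPT}(\mathcal{IL})$, which is the claim. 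The runtime is unchanged because nothing in Algorithm~\ref{alg1} is modified.

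For the second bullet (parallel execution), I would analyze the three steps of Algorithm~\ref{alg1} separately. Step~1 (sampling $\lceil \es n\rceil$ indices) is easily split across $p$ processors in $O(n/p)$ time. Step~2 invokes $\A$ on an instance with $\es n$ variables and at most $m$ constraints; by hypothesis this takes $f_p(\es n, m)$ when run on $p$ processors. Step~3 is the key observation: the value $x_j(\phi) = \mathbf{1}\{\sum_i a_{ij}\phi_i < c_j\}$ depends only on $\phi$ (already computed in Step~2) and the $j$-th column of $A$, with no dependence across different $j$. Hence the $n$ threshold evaluations can be partitioned evenly among the processors and executed asynchronously, giving $O(n/p)$ wall-clock time for this step. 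Summing the three contributions yields the claimed $f_p(\es n, m) + O(n/p)$ bound. Correctness, feasibility, and the $1-1/n$ success probability are identical to the sequential case since parallel execution does not alter which value each processor computes.

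I do not anticipate a genuine technical obstacle: the corollary is essentially a pair of observations about Algorithm~\ref{alg1}'s structure layered on top of Theorem~\ref{t.main}. The only points that require care are (i) explicitly invoking the LP-relaxation inequality to bridge $\mathrm{OPT}(\lp)$ and $\mathrm{OPT}(\mathcal{IL})$ in the first part, and (ii) being precise about which portion of the $O(n)$ overhead in Algorithm~\ref{alg1} actually admits embarrassing parallelization in the second part, so that the resulting $O(n/p)$ term is not conflated with the internal parallel complexity $f_p$ of $\A$ itself.
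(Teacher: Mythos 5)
Your proposal is correct and matches the paper's intended reasoning: the paper states this corollary without a separate proof, treating it as an immediate consequence of Theorem~\ref{t.main} together with the two structural remarks made in Section~\ref{s.alg} (the thresholding rule always outputs integral values, so feasibility for $\lp$ plus $\mathrm{OPT}(\mathcal{IL})\le\mathrm{OPT}(\lp)$ gives the ILP guarantee; and the $n$ threshold evaluations are independent across $j$, so the $O(n)$ overhead parallelizes to $O(n/p)$). Your write-up simply makes those two observations explicit, which is exactly the argument the authors intend.
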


\subsection{Accelerating Gurobi}

We illustrate the speedup provided by our acceleration framework by using it to accelerate Gurobi, a state-of-the-art commercial solver.  Due to limited space, we do not present results applying the accelerator to other, more specialized, LP solvers; however the improvements shown here provide a conservative estimate of the improvements using parallel implementations since the thresholding phase of the framework has a linear speedup when parallelized.  Similarly, the speedup provided by an exact solver (such as Gurobi) provides a conservative estimate of the improvements when applied to  approximate solvers or when applied to solve ILPs.

Note that our experiments consider situations where the assumptions of Theorem \ref{t.main} about $B$, $m$, and $n$ do not hold.  Thus, they highlight that the assumptions of the theorem are conservative and the accelerator can perform well outside of the settings prescribed by Theorem \ref{t.main}.  This is also true with respect to the assumptions on the algorithm being accelerated.  While our proof requires the algorithm to be a $(1,\alpha_d)$-approximation, the accelerator works well for other types of algorithms too.  For example, we have applied it to gradient algorithm such as \cite{S13} with results that parallel those presented for Gurobi below.

\subsubsection{Experimental Setup.} To illustrate the performance of our accelerator, we run %experiments 
Algorithm \ref{alg2}
on randomly generated LPs.  Unless otherwise specified, the experiments use a matrix $A\in \mathbb{R}^{m\times n}$ of size $m=10^2, n=10^6$. Each element of $A$, denoted as $a_{ij}$, is first generated from $[0,1]$ uniformly at random and then set to zero with probability $1-p$. Hence, $p$ controls the sparsity of matrix $A$, and we vary $p$ in the experiments. The vector $c\in \mathbb{R}^n$ is drawn i.i.d.\ from $[1,100]$ uniformly. Each element of the vector $b\in \mathbb{R}^m$ is fixed as $0.1n$. (Note that the results are qualitatively the same for other choices of $b$.) By default, the parameters of the accelerator are set as $\epsilon_s=0.01$ and $\epsilon_f=0$, though these are varied in some experiments. Each point in the presented figures is the average of over 100 executions under different realizations of $A, c$.

To assess the quality of the solution, we measure the \textit{relative error} and \textit{speedup} of the accelerated algorithm as compared to the original algorithm. The relative error is defined as $(1-Obj/OPT)$, where $Obj$ is the objective value produced by our algorithm and $OPT$ is the optimal objective value. The speedup is defined as the run time of the original LP solver divided by that of our algorithm. %The relative error characterizes the quality of the solution, while the speedup measures how fast our algorithm is compared to the LP solver.

We implement the accelerator in Matlab and use it to accelerate Gurobi. The experiments are run on a server with Intel E5-2623V3@3.0GHz 8 cores and 64GB RAM. We intentionally perform the experiments with a small degree of parallelism in order to obtain a conservative estimate of the acceleration provided by our framework.  As the degree of parallelism increases, the speedup of the accelerator increases and the quality of the solution remains unchanged (unless cloning is used, in which case it improves).  

\subsubsection{Experimental Results.} 
%\sv{I prefer present  tense (We fix, and not "we have fixed".) Past tense (we fixed) is my second choice.} \adam{agreed}
Our experimental results highlight that our acceleration framework provides speedups of two orders of magnitude (over $150\times$), while maintaining high-quality solutions (relative errors of $<4$\%).

\emph{The trade-off between relative error and speed.} The fundamental trade-off in the design of the accelerator is between the sample size, $\es$, and the quality of the solution.  The speedup of the framework comes from choosing $\es$ small, but if it is chosen too small then the quality of the solution suffers.  For the algorithm to provide improvements in practice, it is important for there to be a sweet spot where $\es$ is small and the quality of the solution is still good, as indicated in the shaded region of Figure \ref{fig:epsilon}.

\begin{figure}[t]
  \centering
  \subfigure[$p=0.8$]{
    \label{fig:epsilon_a} %% label for first subfigure
    \includegraphics[width=1.6in]{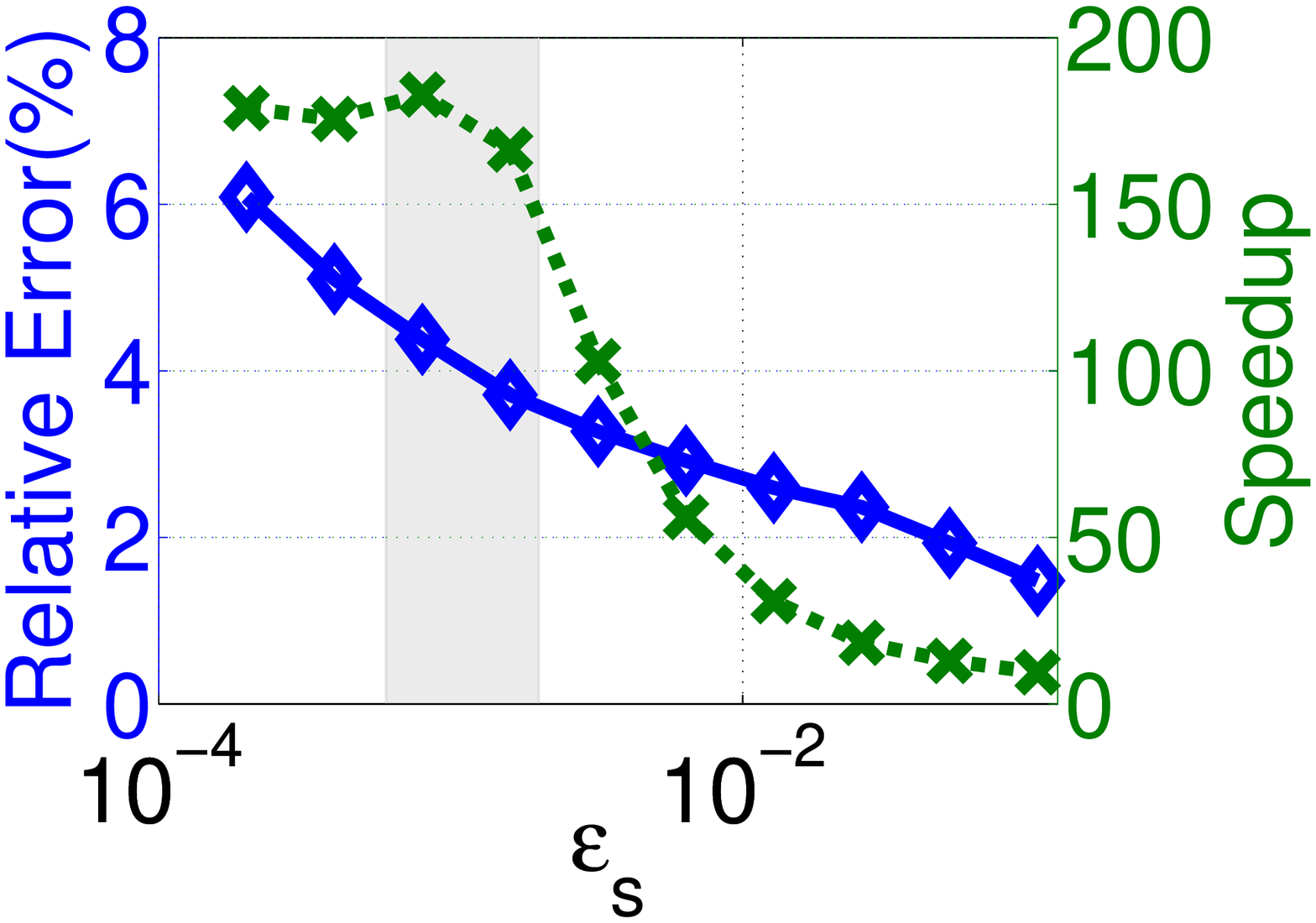}}
  %\vspace{0.4in}
  \subfigure[$p=0.4$]{
    \label{fig:epsilon_b} %% label for second subfigure
    \includegraphics[width=1.6in]{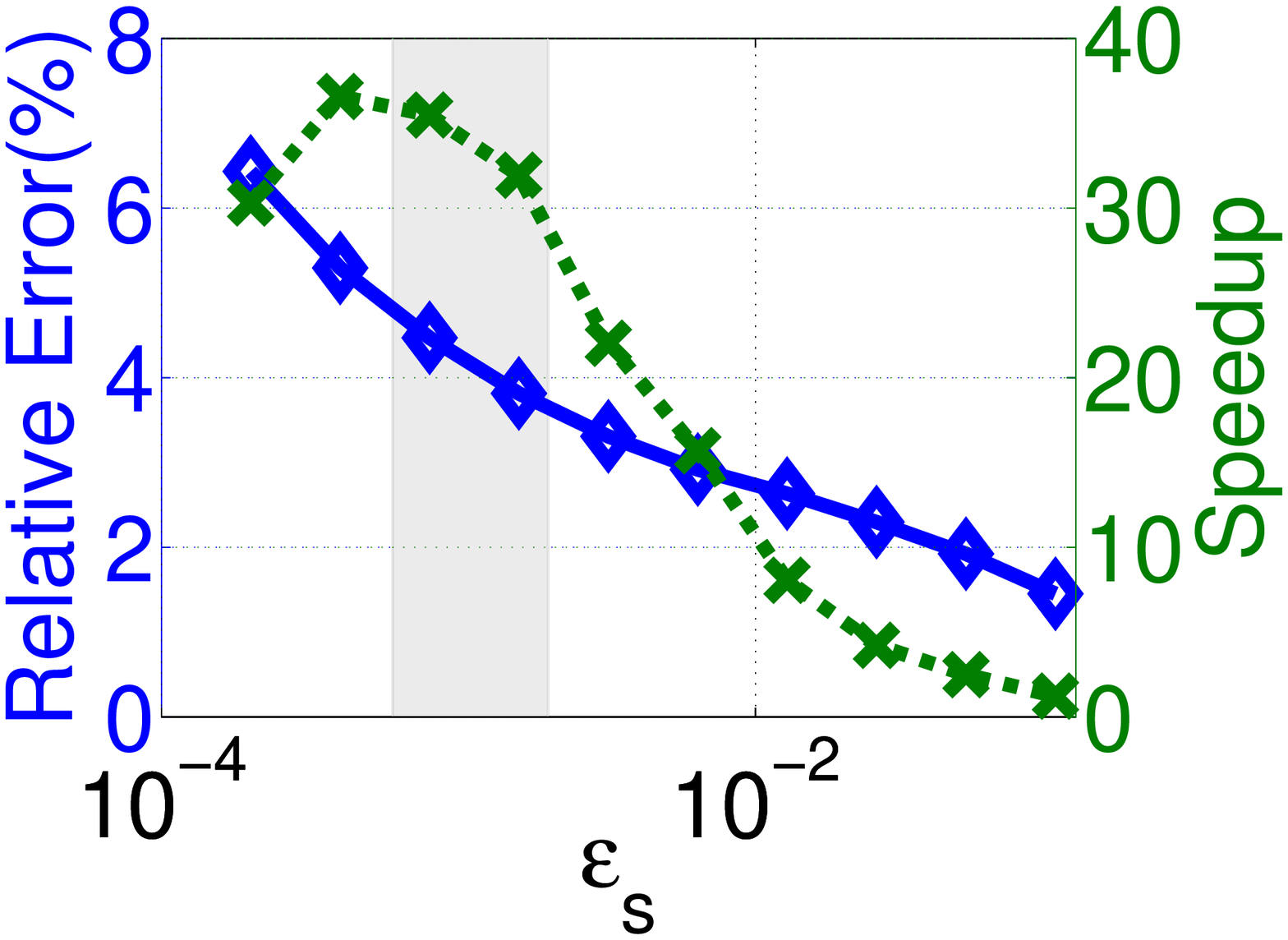}}
  \caption{\emph{Illustration of the relative error and speedup across sample sizes, $\es$. Two levels of sparsity, $p$, are shown.}}
\label{fig:epsilon} %% label for entire figure
\end{figure}

\emph{Scalability.}  In addition to speeding up LP solvers, our acceleration framework provides significantly improved scalability.  Because the LP solver only needs to be run on a (small) sample LP, rather than the full LP, the accelerator provides order of magnitude increase in the size of problems that can be solved.  This is illustrated in Figure~\ref{fig:problem_size}.  The figure shows the runtime and relative error of the accelerator.  In these experiments we have fixed $p=0.8$ and $n/m=10^3$ as we scale $m$. We have set  $\es=0.01$ throughout.  As (a) shows, one can choose $\es$ more aggressively in large problems since leaving $\es$ fixed leads to improved accuracy for large scale problems. Doing this would lead to larger speedups; thus by keeping $\es$ fixed we provide a conservative estimate of the improved scalability provided by the accelerator.  The results in (b) illustrate the improvements in scalability provided by the accelerator.  Gurobi's run time grows quickly until finally, it runs into memory errors and cannot arrive at a solution.  In contrast, the runtime of the accelerator grows slowly and can (approximately) solve problems of much larger size.  To emphasize the improvement in scalability, we run an experiment on a laptop with Intel Core i5 CPU and 8 GB RAM. For a problem with size $m=10^2, n=10^7$, Gurobi fails due to memory limits. In contrast, the accelerator produces a solution in $10$ minutes with relative error less than $4\%$.

\begin{figure}[t]
  \centering
\subfigure[Relative Error]{
    \label{fig:problem_size_a} %% label for first subfigure
    \includegraphics[width=1.6in]{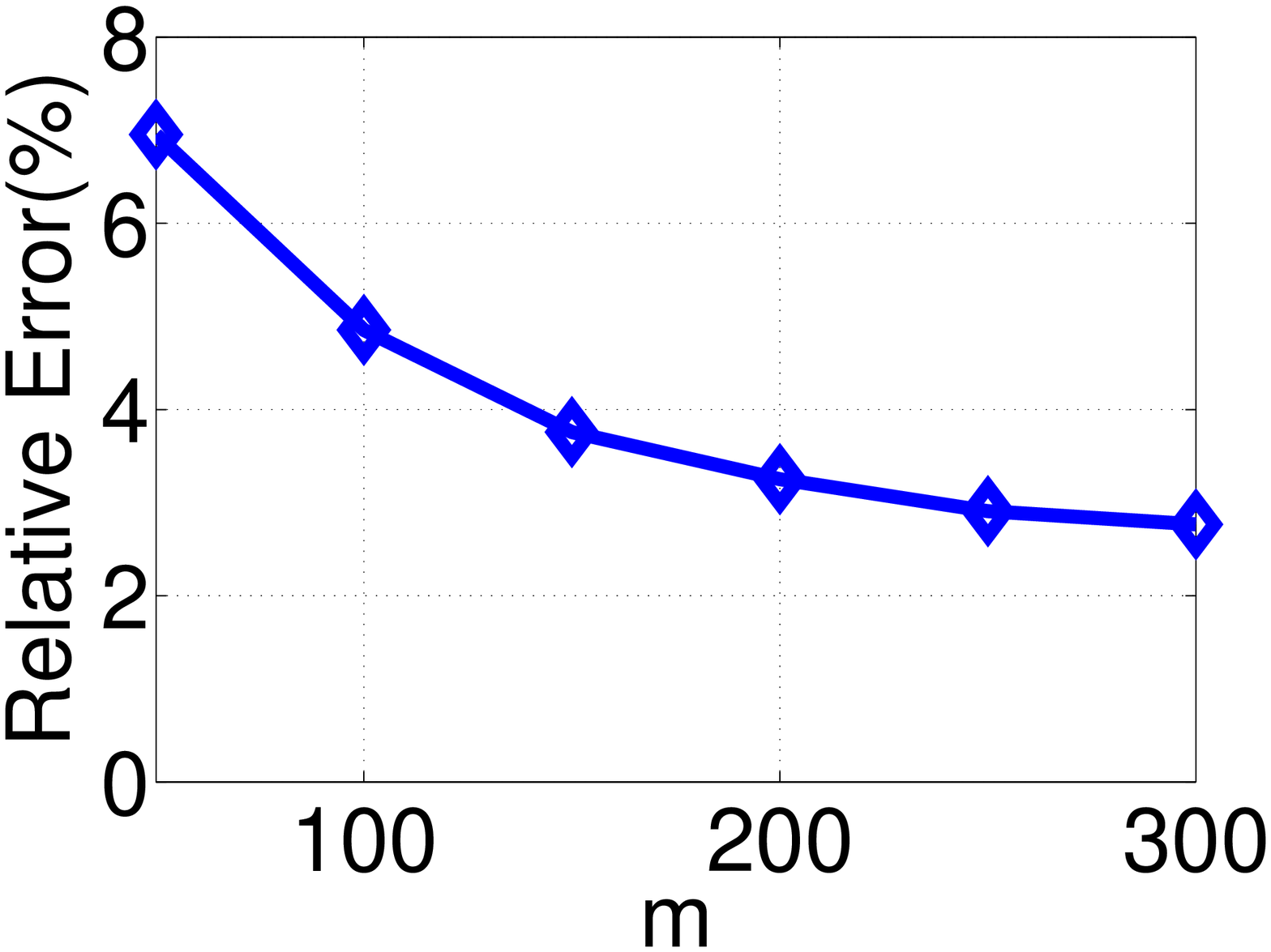}}
\subfigure[Runtime]{
    \label{fig:problem_size_b} %% label for second subfigure
    \includegraphics[width=1.6in]{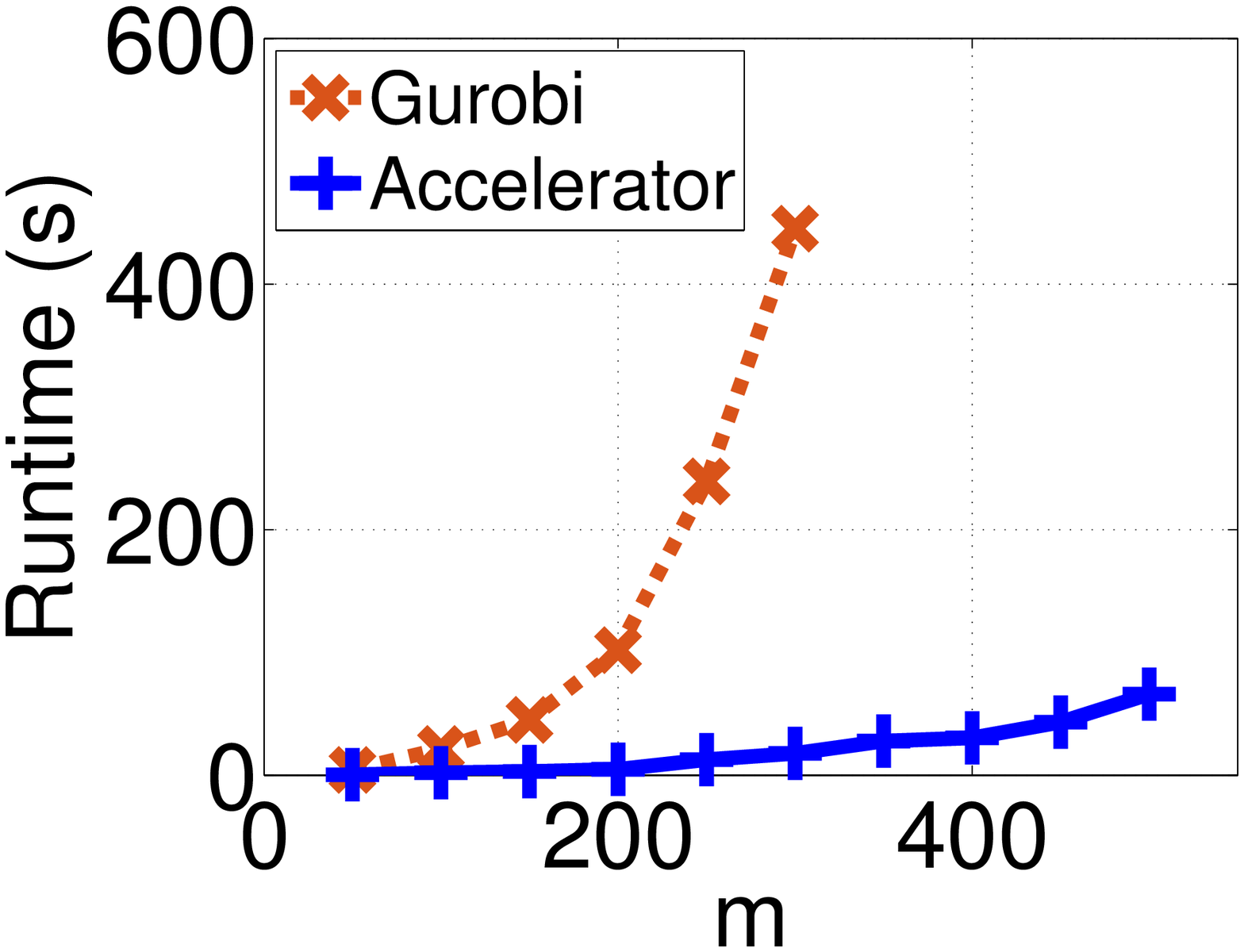}}
\caption{\emph{Illustration of the relative error and runtime as the problem size, $m$, grows.}}
  \label{fig:problem_size} %% label for entire figure
\end{figure}

\emph{The benefits of cloning.} Speculative execution is an important tool that parallel analytics frameworks use to combat the impact of stragglers.  Our acceleration framework can implement speculative execution seamlessly by running multiple clones (samples) in parallel and choosing the ones that finish the quickest.  We illustrate the benefits associated with cloning in Figure~\ref{Figure_eps_boost}.  This figure shows the percentage gain in relative error and speedup associated with using different numbers of clones. In these experiments, we fix $\epsilon=0.002$ and $p=0.8$.  We vary the number of clones run and the accelerator outputs a solution after the fastest four clones have finished.  Note that the first four clones do not impact the speedup as long as they can be run in parallel.  However, for larger numbers of clones our experiments provide a conservative estimate of the value of cloning since our server only has 8 cores.  The improvements would be larger than shown in Figure~\ref{Figure_eps_boost} in a system with more parallelism. Despite this conservative comparison, the improvements illustrated in Figure~\ref{Figure_eps_boost} are dramatic.  Cloning reduces the relative error of the solution by $12\%$ and triples the speedup.  Note that these improvements are significant even though the solver we are accelerating is not a parallel solver.  %Thus, clones allow algorithms to obtain some of the benefit of parallelism even if they are not parallel.  

\begin{figure}[t]
    \centering
    \subfigure[Relative Error]{
    \includegraphics[width=1.6in]{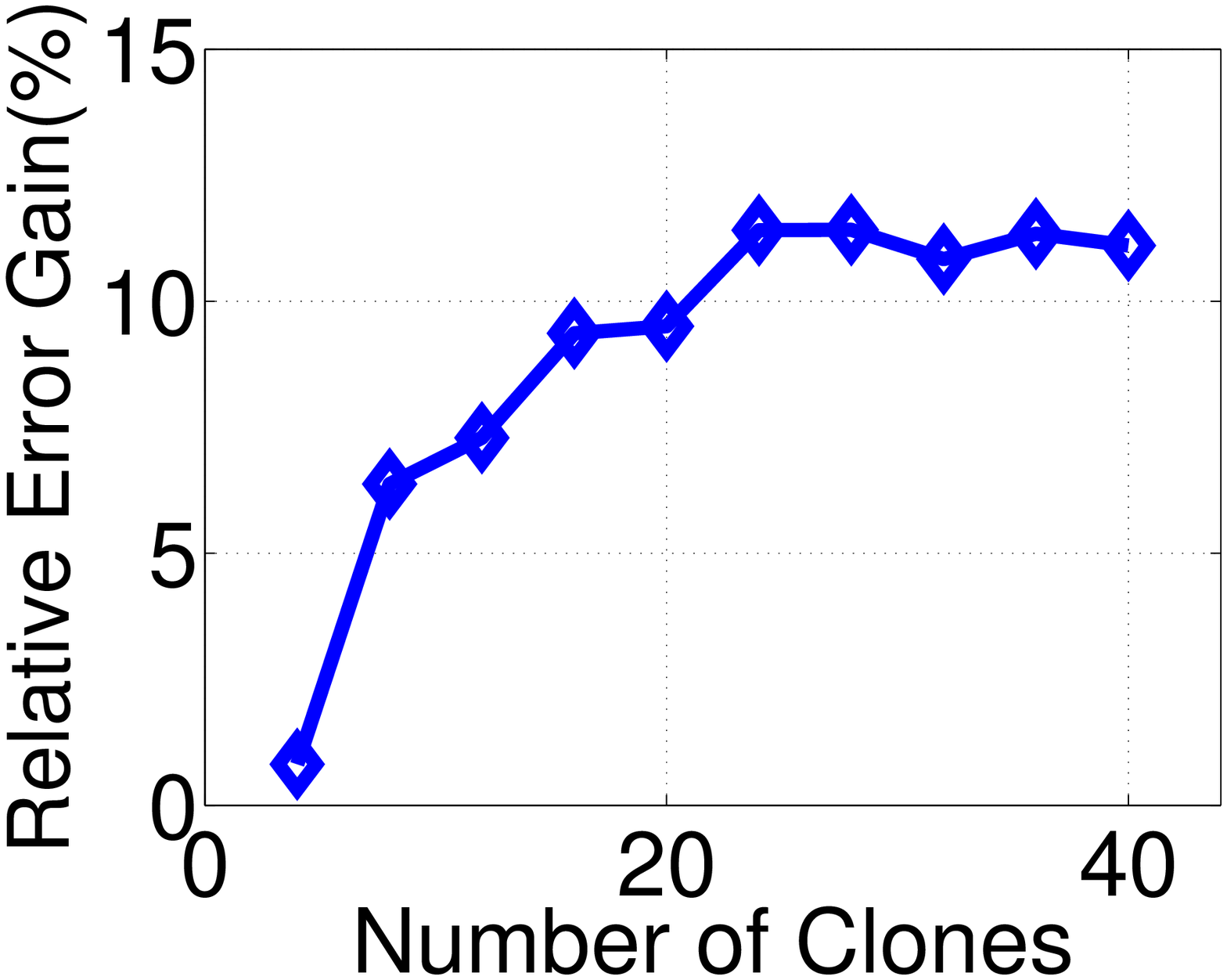}}
    \subfigure[Speedup]{
    \includegraphics[width=1.6in]{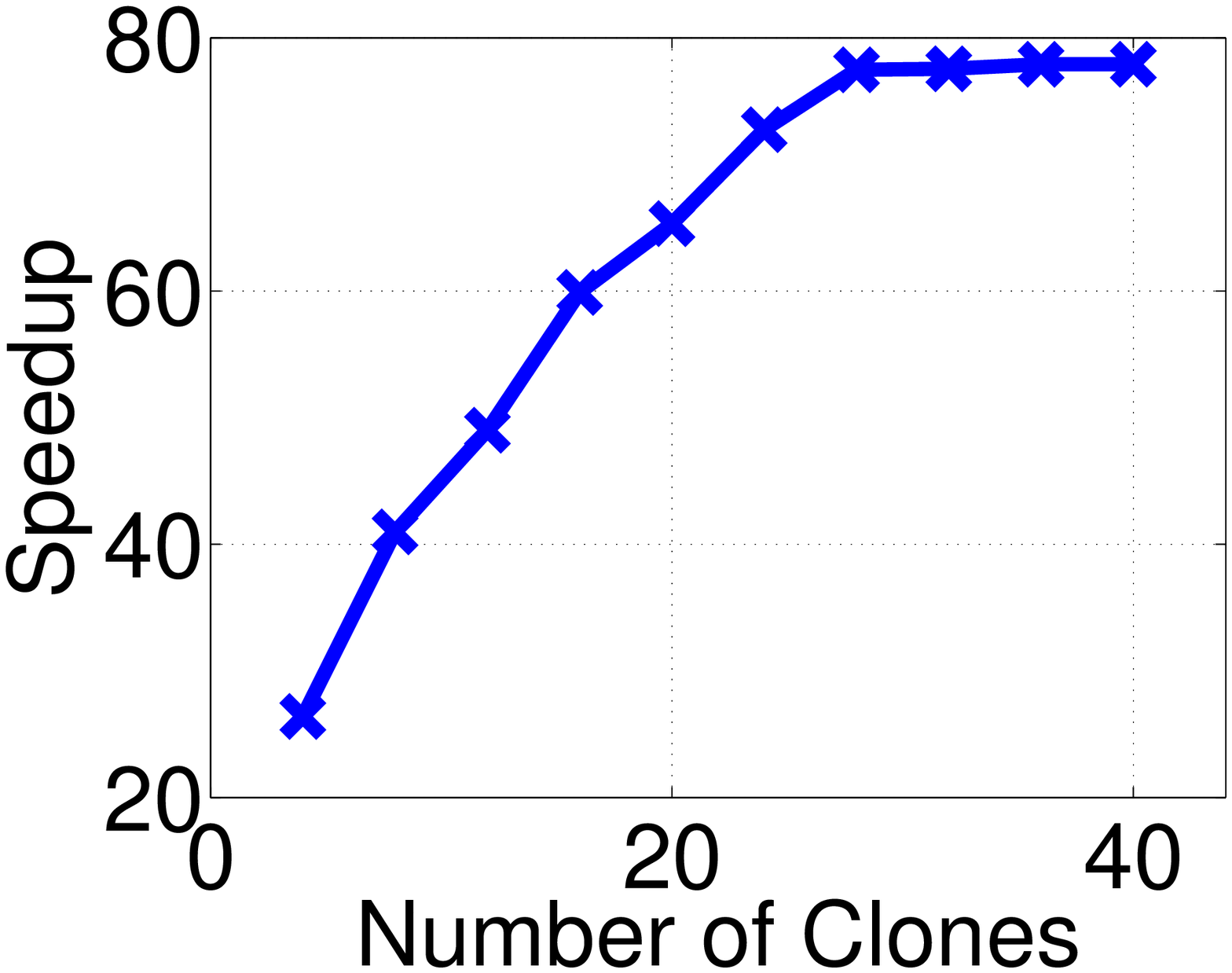}}
    \caption{\emph{Illustration of the impact of cloning on solution quality as the number of clones grows.}}
\label{Figure_eps_boost}
 \end{figure}

\subsection{Case Study}

%We ran an experiment on a California road network dataset taken from the SNAP database \cite{snap}. 
%We consider an LP that represents an optimal resource allocation problem on a network. This can be described as a multi-constraint knapsack problem. In this example, we consider placing resources on intersections. For example, consider placing public restrooms, trashcans, or poster advertisements in order to maximize social welfare, but such that there never is a particularly high concentration in any area. 

To illustrate the performance in a specific practical setting, we consider an example focused on optimal resource allocation in a network.  We consider an LP that represents a multi-constraint knapsack problem associated with placing resources at intersections in a city transportation network. For example, we can place public restrooms, advertisements, or emergency supplies at intersections in order to maximize social welfare, but such that there never is a particularly high concentration of resources in any area. 

Specifically, we consider a  subset of the California road network dataset \cite{snap}, consisting of $100,000$ connected traffic intersections. 
We consider only a subset of a total of $1,965,206$ intersections because Gurobi is unable to handle such a large dataset when run on a laptop with Intel Core i5 CPU and 8 GB RAM. 
We choose $1000$ of the $100,000$ intersections uniformly at random and defined for each of them a local vicinity of $20,000$ neighboring intersections, allowing overlap between the vicinities. 
The goal is to place resources strategically at intersections, such that the allocation is not too dense within each local vicinity. 
Each intersection is associated with a binary variable which represents a yes or no decision to place resources there. Resources are constrained such that the sum of the number of resource units placed in each local vicinity does not exceed $10,000$. 

Thus, the dataset is as follows. Each element $A_{ij}$ in the data matrix is a binary value representing whether or not the $i$-th intersection is part of the $j$-th  local vicinity. There are $1000$  local vicinities and $100,000$ intersections, hence $A$ is a $(1000 \times 100,000)$ matrix.
Within each local vicinity, there are no more than $b_j = 10,000$ resource units. 

The placement of resources at particular locations has an associated utility, which is a quantifier of how beneficial it is to place resources at various locations. For example, the benefit of placing public restrooms, advertisements, or emergency supplies at certain locations may be proportional to the population of the surrounding area. In this problem, we randomly draw the utilities from Unif$[1,10]$. The objective value is the sum of the utilities at whose associated nodes resources are placed.
% where each intersection is part of one of $1000$ neighborhoods. 
% the chosen neigh don't mater
%We chose $1000$ intersections and constructed for each of them a neighborhood of $20,000$ neighboring intersections. 
%Constraints are imposed so that the sum of the number of emergency supply units placed in each neighborhood does not exceed $10,000$. Thus, $b_j = 10,000$. 

Figure \ref{Figure_RD} demonstrates the relative error and runtime of the accelerator compared to Gurobi, as we vary the sample size $\epsilon_s$. There is a speed up by a factor of more than $30$ when the approximation ratio is $0.9$, or a speed up by a factor of about $9$ when the approximation ratio is $0.95$.

\begin{figure}[t]
    \centering
    \subfigure[Relative Error]{
    \includegraphics[width=1.6in]{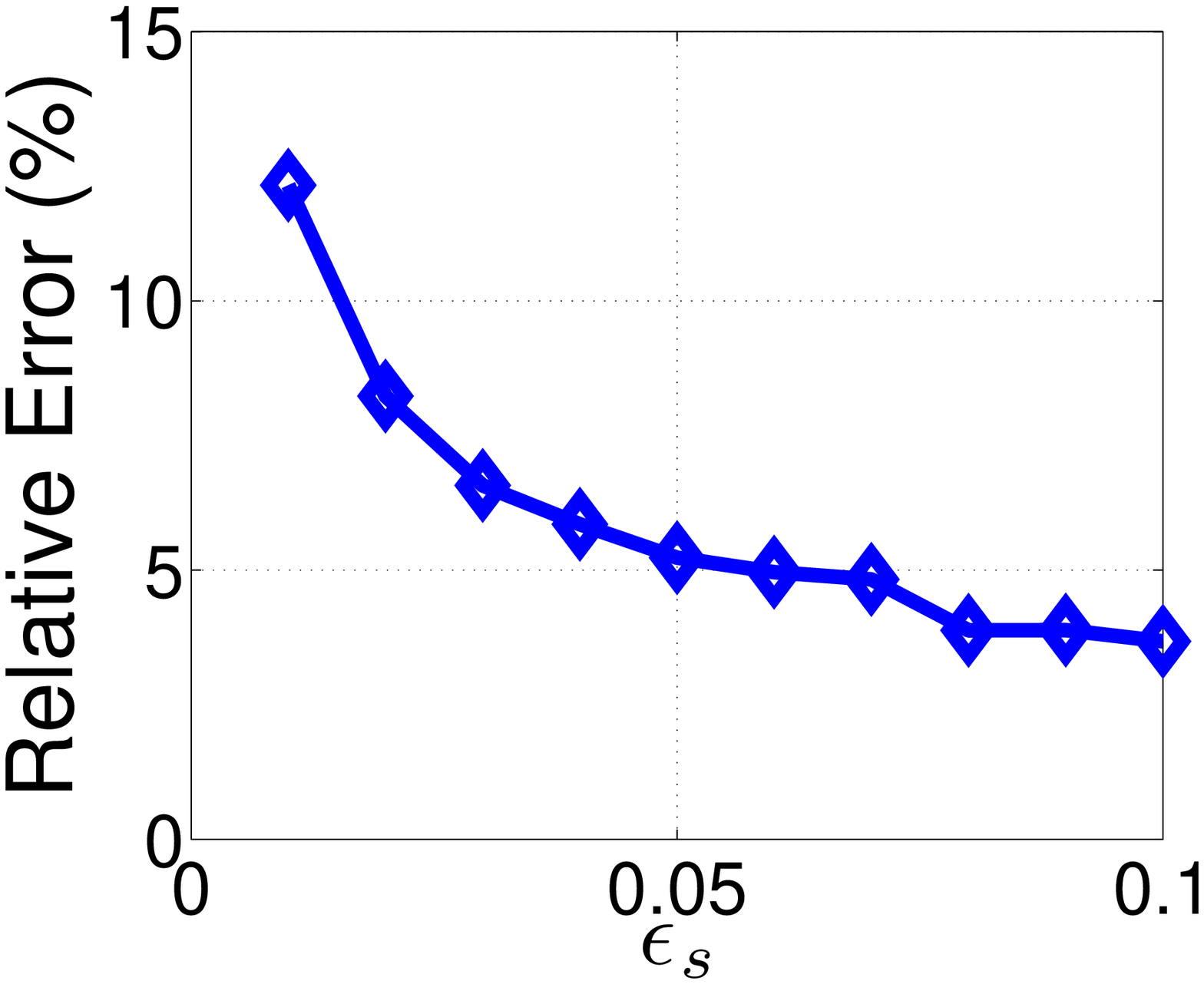}}
    \subfigure[Runtime]{
    \includegraphics[width=1.6in]{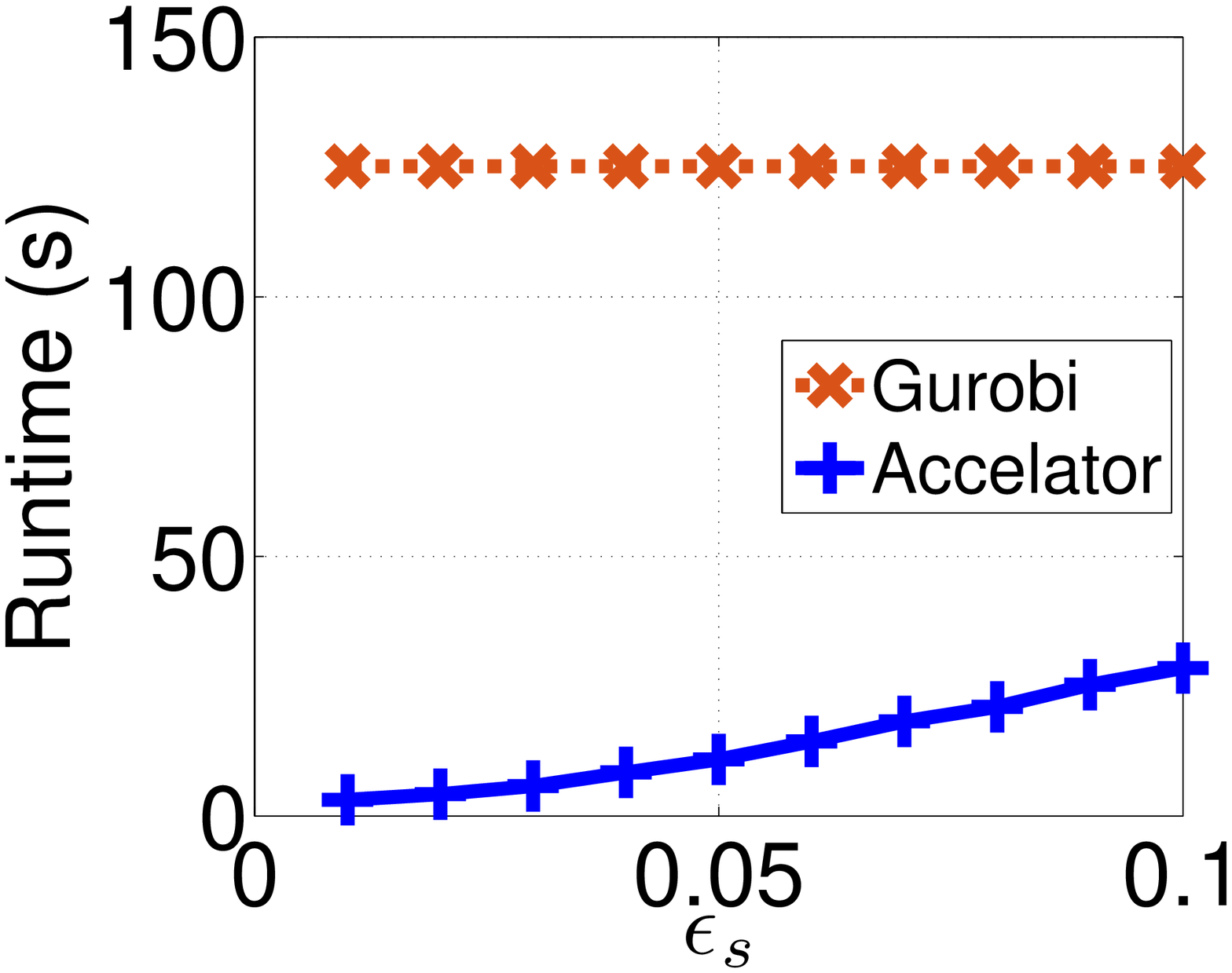}}
    \caption{\emph{Illustration of the relative error and runtime across sample sizes, $\es$, for the real data experiment on the California road network dataset.}}
\label{Figure_RD}
 \end{figure}

\section{Proofs} 
\label{s.proof}

In this section we present the technical lemmas used to prove Theorem \ref{t.main}.  The  approach of the proof is inspired by the techniques in ~\cite{Agrawal}; however the analysis in our case is more involved.  This is due to the fact that our result applies to approximate LP solvers while the techniques in~\cite{Agrawal} only apply to exact solvers. For example, this leads our framework to have three error parameters ($\es, \ef$, $\alpha_d$) while~\cite{Agrawal} has a single error parameter.

The proof has two main steps: (1) show that the solution provided by Algorithm~\ref{alg1} is feasible with high probability (Lemma~\ref{lem:shipra1}); and (2) show that the value of the solution is sufficiently close to optimal with high probability (Lemma~\ref{lem:shipra2}).  %We show these in the subsections below. 
In both cases, we use the following concentration bound, e.g.,~\cite{aw}.
\begin{theorem}[Hoeffding-Bernstein Inequality]\label{thm:hb} Let $u_1, u_2 \ldots, u_s$ be random samples without replacement from the real numbers $r_1, \ldots, r_n$, where $r_j \in [0,1]$. For $t>0$,
	$\Pr\left[ |\textstyle\sum_{j=1}^s u_j - \frac{s}{n}\sum_{j=1}^{n}r_j| \geq t\right]\leq 2\exp\left( \frac{-t^2}{2s\sigma^2_n+t} \right),  $
	where $\sigma^2_n = \frac{1}{n}\sum_{j=1}^n (r_j-\sum_{j=1}^n r_j/n)^2$.
\end{theorem}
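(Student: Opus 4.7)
The plan is to prove this Bernstein-type tail bound for sampling without replacement via the classical two-step approach: (i) reduce to the with-replacement (independent) case using Hoeffding's convex comparison inequality, then (ii) apply the exponential-moment method with an optimized parameter $\lambda$.

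First I would set $S = \sum_{j=1}^s u_j$ and $\mu = \mathbb{E}[S] = \frac{s}{n}\sum_{j=1}^n r_j$, and for any $\lambda>0$ apply Markov's inequality to $e^{\lambda(S-\mu)}$:
\[
\Pr[S - \mu \geq t] \;\leq\; e^{-\lambda t}\, \mathbb{E}\!\left[e^{\lambda(S-\mu)}\right],
\]
together with an analogous bound for the lower tail, which will account for the factor of $2$ in the final inequality.

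The key step is controlling $\mathbb{E}[e^{\lambda(S-\mu)}]$ when the $u_j$'s are drawn \emph{without} replacement. For this I would invoke Hoeffding's 1963 reduction: if $v_1,\ldots,v_s$ are drawn i.i.d.\ uniformly from $\{r_1,\ldots,r_n\}$ and $T=\sum_j v_j$, then $\mathbb{E}[\phi(S)] \leq \mathbb{E}[\phi(T)]$ for every continuous convex $\phi$. Applied to $\phi(x) = e^{\lambda x}$, this reduces the problem to a sum of $s$ i.i.d.\ bounded random variables with mean $\mu/s$ and variance $\sigma_n^2$. For this i.i.d.\ sum I would then use the standard Bernstein MGF estimate: since $v_j - \mathbb{E}[v_j]$ lies in $[-1,1]$ with variance $\sigma_n^2$, a Taylor expansion of the exponential together with the bound $k! \geq 2\cdot 3^{k-2}$ yields
\[
\mathbb{E}\!\left[e^{\lambda(v_j - \mathbb{E}[v_j])}\right] \;\leq\; \exp\!\left(\frac{\lambda^2 \sigma_n^2}{2(1-\lambda/3)}\right)
\]
for $0<\lambda<3$, and multiplying over the $s$ independent samples gives the analogous bound on $\mathbb{E}[e^{\lambda(T-\mu)}]$ with $s\sigma_n^2$ replacing $\sigma_n^2$ in the numerator.

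Combining with the Chernoff step and optimizing by choosing $\lambda = t/(s\sigma_n^2 + t/3)$ produces the classical Bernstein form $\exp(-t^2/(2s\sigma_n^2+2t/3))$; loosening the $2/3$ to $1$ in the linear term of the denominator yields exactly the stated $\exp(-t^2/(2s\sigma_n^2+t))$, and a union bound over the two tails supplies the leading factor of $2$. The main obstacle is the Hoeffding convex reduction from sampling without replacement to sampling with replacement, which relies on a non-trivial exchangeability/symmetrization argument; the remainder is a textbook derivation of Bernstein's inequality.
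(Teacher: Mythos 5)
The paper does not actually prove Theorem~\ref{thm:hb}; it imports the Hoeffding--Bernstein inequality as a known result with a citation and uses it as a black box in Lemmas~\ref{lem:shipra1} and~\ref{lem:shipra2}, so there is no in-paper argument to compare yours against. That said, your outline is the standard and correct derivation. The two ingredients are exactly right: Hoeffding's 1963 convex-ordering theorem ($\mathbb{E}[\phi(S)]\leq\mathbb{E}[\phi(T)]$ for convex $\phi$, comparing sampling without replacement to i.i.d.\ sampling from the empirical distribution) legitimately transfers the MGF bound, and the Bernstein moment computation goes through because each centered sample lies in $[-1,1]$ with second moment $\sigma_n^2$, so $|\mathbb{E}[Z^k]|\leq\sigma_n^2$ for $k\geq 2$ and the bound $k!\geq 2\cdot 3^{k-2}$ gives $\mathbb{E}[e^{\lambda Z}]\leq\exp\bigl(\lambda^2\sigma_n^2/(2(1-\lambda/3))\bigr)$ for $\lambda<3$. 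Your optimized $\lambda=t/(s\sigma_n^2+t/3)$ is admissible (it is automatically below $3$) and yields $\exp\bigl(-t^2/(2s\sigma_n^2+2t/3)\bigr)$, which is indeed at most the stated $\exp\bigl(-t^2/(2s\sigma_n^2+t)\bigr)$ since enlarging the denominator only weakens the bound; the lower tail follows by applying the same argument to the negated values, giving the factor of $2$. The only step you flag as nontrivial, Hoeffding's exchangeability argument, is genuinely the crux, and citing it (rather than reproving it) is what the paper itself effectively does for the whole theorem.
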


\subsection*{Step 1: The solution is feasible}

%The first step in our proof of Theorem \ref{t.main} is to show that the solution is feasible with high probability.  

\begin{lemma}\label{lem:shipra1}
Let $\A$ be a $(\alpha_p, \alpha_d)$-approximation algorithm for packing LPs, $\alpha_p, \alpha_d \geq 1$. For any $\es>0$, $\ef \geq \sqrt{\frac{6(m+2)\log{n}}{\es B}}$,  the solution  Algorithm~\ref{alg1} gives to $\lpa$ is feasible with probability at least $1-1/{2n}$, where the probability is over the choice of samples.
\end{lemma}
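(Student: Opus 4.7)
The plan is to prove feasibility one packing constraint at a time and union-bound over the $m$ constraints (plus a few auxiliary events), targeting per-event failure probability of order $1/(mn)$. Fix $i \in [m]$; the goal is to show $\sum_{j=1}^n a_{ij}\hat x_j \leq b_i$ with high probability. For any fixed (non-random) candidate dual $\phi \in \mathbb{R}^m$, the quantities $r_j := a_{ij} x_j(\phi) \in [0,1]$ are deterministic, so I can apply Theorem~\ref{thm:hb} to the random sample $S$ to control the empirical usage $\tilde V_i(\phi) := \sum_{j\in S} a_{ij} x_j(\phi)$ around its mean $\es V_i(\phi)$, where $V_i(\phi) := \sum_{j=1}^n a_{ij} x_j(\phi)$. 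Setting the deviation parameter to $t = \ef \es b_i/\alpha_d$ and using the variance bound $\sigma_n^2 \leq V_i(\phi)/n$ makes the exponent of the Bernstein tail at least of order $\ef^2 \es B / \alpha_d^2$, which is exponentially small in the stated bound on $\ef$.

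The subtlety is that the actual $\phi$ returned by $\A$ depends on $S$, so Theorem~\ref{thm:hb} cannot be applied to it directly. The plan is to handle this by taking a union bound over all potential optimal dual vectors. Since an optimal $\phi$ is an extreme point of the dual polyhedron of the sample LP, and each such extreme point is determined by at most $m + O(1)$ tight constraints chosen from a pool polynomial in $n$, there are at most $n^{m+O(1)}$ candidates. Taking a union bound over this set and over the $m$ packing constraints absorbs a $(m+2)\log n$ factor into the exponent, which is exactly what appears inside the square root in the stated lower bound on $\ef$.

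The second ingredient is a deterministic bound $\tilde V_i(\phi) \leq \tilde b_i = (1-\ef)\es b_i/\alpha_d$. Here I would lean on the primal approximate complementary slackness condition with $\alpha_p = 1$: if $\hat x_j = 1$ for some $j \in S$, then by definition $\sum_{i'} a_{i'j}\phi_{i'} < c_j$, and this forces $j$ to lie in the support of a feasible primal solution of the sample LP whose constraint-$i$ usage is bounded by $\tilde b_i$. Combining this with the concentration step and rescaling by $1/\es$ gives the chain
\[
V_i(\phi) \;\leq\; \tilde V_i(\phi)/\es + \ef b_i/\alpha_d \;\leq\; (1-\ef)b_i/\alpha_d + \ef b_i/\alpha_d \;=\; b_i/\alpha_d \;\leq\; b_i,
\]
which is exactly the feasibility we need. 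The main obstacle will be the second ingredient: carefully using primal approximate complementary slackness to convert the feasibility of the sample LP (stated in terms of $\tilde x$) into a bound on the threshold-rule usage $\sum_{j\in S} a_{ij}\hat x_j$. The union bound over extreme-point duals is standard once one accepts the $n^{O(m)}$ enumeration, and the Bernstein-type concentration is routine given the variance bound $\sigma_n^2 \leq V_i(\phi)/n$ -- which is itself the reason the final dependence is on $B$ rather than on $n$.
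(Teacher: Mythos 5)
Your overall architecture matches the paper's: fix a constraint $i$, use Hoeffding--Bernstein on the without-replacement sample, union-bound over an $n^{O(m)}$-size family of candidate duals, and supply a deterministic step converting sample-LP feasibility into the bound $\sum_{j\in S}a_{ij}x_j(\phi)\le(1-\ef)\es b_i$. However, both of your key ingredients have genuine gaps.

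First, the union bound over ``extreme points of the dual polyhedron of the sample LP'' does not work as stated. The dual polyhedron is itself a function of the random sample $S$, and, more fundamentally, $\A$ is an \emph{approximation} algorithm: the dual vector $\sphi$ it returns need not be optimal, and need not be an extreme point of anything, so your candidate set simply may not contain the $\phi$ that is actually used for thresholding. The paper sidesteps this by union-bounding not over dual vectors but over the possible \emph{realizations} $R(\phi)=\{a_{ij}x_j(\phi)\}$: any $\phi\in\mbR^m$ whatsoever induces a separation of the $n$ points $(c_j,a_j)$ by a hyperplane, and a classical combinatorial-geometry bound caps the number of such separations at $n^m$. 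That count covers every feasible dual $\A$ could output, vertex or not.

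Second, your deterministic step leans on the wrong slackness condition. Knowing that $x_j(\phi)=1$ places $j$ ``in the support'' of $\xsamp$ only yields $\xsamp_j>0$; if $\xsamp_j$ were tiny, $\sum_{j\in S}a_{ij}x_j(\phi)$ could far exceed $\sum_{j\in S}a_{ij}\xsamp_j$ and the chain $\tilde V_i(\phi)\le(1-\ef)\es b_i/\alpha_d$ collapses. What is actually needed is the \emph{dual} approximate complementary slackness applied to the box constraints $x_j\le 1$: since $\sum_i a_{ij}\sphi_i<c_j$ together with dual feasibility forces $\spsi_j>0$, which forces $\xsamp_j\ge 1/\alpha_d$, one gets $a_{ij}x_j(\sphi)\le\alpha_d\,a_{ij}\xsamp_j$, and the factor $\alpha_d$ is exactly cancelled by the extra $1/\alpha_d$ scaling built into the right-hand side of the sample LP. Note also that this lemma is stated for general $\alpha_p\ge 1$ (the $\alpha_p=1$ hypothesis is only needed for the approximation-ratio lemma), so invoking primal slackness with $\alpha_p=1$ is both insufficient and unnecessarily restrictive here.
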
  

\begin{proof}

Define a \emph{price-realization}, $R(\phi)$, of a price vector $\phi$ as the set $\{r_{ij}=a_{ij}x_j(\phi), j\in[n], i \in [m]\}$ (note that $r_{ij} \in \{0, a_{ij}\}$) and denote, a ``row'' of $R(\phi)$ as
$R_i(\phi)=\{r_{ij}=a_{ij}x_j(\phi), j\in[n]\}$. We say that $R_i(\phi)$ is \emph{infeasible} if $\sum_{j \in [n]} r_{ij} >b_i$. The approach of this proof is to bound the probability that, for a given sample, the sample LP is feasible while there is some $i$ for which $R_i(\phi)$ is not feasible in the original LP.

To begin, note that it naively seems that there are $2^n$ possible realizations of $R(\phi)$, over all possible price vectors $\phi$, as $x_j \in \{0,1\}$. However, a classical result of combinatorial geometry~\cite{cg} shows that there are only $n^m$ possible realizations since each  $R(\phi)$ is characterized by a separation of $n$ points $(\{c_j,a_j\}_{j=1}^n)$ in an $m$-dimensional plane by a hyperplane, where $a_j$ denotes the $j$-th column of $A$.  The maximal number of such hyperplanes is $n^m$. 
 %$x(\phi)$ is infeasible if $\sum_{j \in [n]} r_{ij} >b_i$. Instead of bounding the probability that this happens for every sample, which would require reasoning about ${n \choose \es n}$ events, we do it for every $R(\phi)$.

Next, we define a sample $S \subset [n]$, $|S|=\es n$ as \emph{$R_i$-good} if  $\sum_{j \in S} r_{ij}  \leq  (1-\ef) \es  b_i$. Let $\xsamp$ be the solution to the sample LP for some sample $S'$. We  say that $S$ (possibly $S \neq 
S'$) is \emph{$\xsamp_i$-good} if $\sum_{j \in S} a_{ij}\xsamp_j  \leq  \toteps  b_i$. The following claim relates these two definitions. 
%Its proof is omitted due to space constraints. 

\begin{claim}\label{claim:ooo}
	If a sample $S$ is $\xsamp_i$-good then $S$ is $R_i$-good.
\end{claim} 
\begin{proof} Denote the dual solution of the sample LP by $\ysamp=[\sphi,\spsi]$. The  dual complementary slackness conditions imply that, if $\spsi_j > 0$ then $\frac{1}{\alpha_d} \leq \xsamp_j \leq 1$ for $j \in [s]$. Further, the allocation rule of Algorithm~\ref{alg1} sets $x_j(\sphi)=1$ if $\sum_{i=1}^{m} a_{ij} \sphi_i <  c_j$, which only occurs when  $\spsi_j>0$.  Therefore, if $x_j(\sphi)=1$, it implies that $\spsi_j>0$, which in turn implies that $\frac{1}{\alpha_d} \leq \xsamp_j$. Consequently, 
	\begin{align*}
	\sum_{j \in S} a_{ij}x_j(\sphi)  
	&\leq \sum_{j \in S} \alpha_d a_{ij}\xsamp_j 
	&\leq  \alpha_d\toteps  b_i \\
	&= (1-\ef) \es b_i, 
	\end{align*}
	which shows that $S$ is $R_i$-good, completing the proof.
\end{proof}

%\begin{proof} \adam{remove this proof for space} Denote the dual solution of the sample LP by $\ysamp=[\sphi,\spsi]$. The  dual complementary slackness conditions imply that, if $\spsi_j > 0$ then $\frac{1}{\alpha_d} \leq \xsamp_j \leq 1$ for $j \in [s]$. Further, the allocation rule of Algorithm~\ref{alg1} sets $x_j(\sphi)=1$ if $\sum_{i=1}^{m} a_{ij} \sphi_i <  c_j$, which only occurs when  $\spsi_j>0$.  Therefore, if $x_j(\sphi)=1$, it implies that $\spsi_j>0$, which in turn implies that $\frac{1}{\alpha_d} \leq \xsamp_j$. Consequently, 
%\begin{align*}
%\sum_{j \in S} a_{ij}x_j(\sphi)  
%&\leq \sum_{j \in S} \alpha_d a_{ij}\xsamp_j 
%&\leq  \alpha_d\toteps  b_i \\
%&= (1-\ef) \es b_i, 
%\end{align*}
%which shows that $S$ is $R_i$-good, completing the proof.
%\end{proof}

Next, fix the LP and $R(\phi)$. For the purpose of the proof, choose $i \in [n]$ uniformly at random. 
Next, we sample  $\es n$ elements without replacement from $n$ variables taking the values $\{r_{ij}\}$. Call this sample $S$.  Let $X=\sum_{j \in S} r_{ij}$ be the random variable denoting the sum of these random variables. Note that  $\expect{X}=\es\sum_{j \in N}{r_{ij}}$, where the expectation is over the choice of $S$, and  that the events $ \sum_{j \in N}r_{ij} >b_i$ and  $\expect{X}>\es b_i$ are equivalent.  The probability that a sample is $\xsamp_i$-good and $R_i(\phi)$ is infeasible is
\begin{align}
	&\Pr\left[ \sum_{j \in S} a_{ij}\xsamp_j  \leq  \toteps b_i  \wedge  \sum_{j \in N}r_{ij} >b_i\right] \notag\\
	&\leq \Pr\left[ \sum_{j \in S} r_{ij}  \leq  (1-\ef) \es b_i  \wedge  \sum_{j \in N}r_{ij} >b_i\right] \label{ooo}\\
 &\leq \Pr\left[ \sum_{j \in S} r_{ij}  \leq  (1-\ef) \es b_i  \mid  \sum_{j \in N}r_{ij} >b_i\right]\notag\\
	&\leq \Pr\left[ |X - \expect{X}| >
	\ef\es b_i\right]\notag\\
	& \leq 2\exp\left( -\frac{\ef^2 \es^2 b_i^2}{2\es b_i+\ef\es b_i}\right)\label{ooo1}\\
		& = 2\exp\left( -\frac{\ef^2 \es b_i}{2+\ef}\right) \leq \frac{1}{2n^{m+2}},\label{ooo2}
\end{align}
where~\eqref{ooo} is due to Claim~\ref{claim:ooo},~\eqref{ooo1} uses Theorem~\ref{thm:hb}, and~\eqref{ooo2} uses the fact that $B \geq \frac{6(m+2)\log{n}}{\es \ef^2}$.

To complete the proof, we now take a union bound over all possible realizations of $R$, which we bounded earlier by $n^m$, and values of $i$.
\end{proof} 

\subsection*{Step 2: The solution is close to optimal}

To prove that the solution is close to optimal we make two mild, technical assumptions.

\begin{assumption} \label{ass1} For any dual prices $y=[\phi,\psi]$, there are at most $m$ columns such that $\phi^T a_j=c_j$.
\end{assumption}

\begin{assumption}  \label{ass2} Algorithm $\A$ maintains primal and dual solutions $x$ and $y=[\phi, \psi]$ respectively with $\psi>0$ only if $\sum_{j=1}^n a_{ij}x_j <c_j$.
\end{assumption}

Assumption~\ref{ass1} does not always hold; however it can be enforced by perturbing each $c_j$ by a small amount at random (see, e.g.,~\cite{DH09,Agrawal}).  Assumption~\ref{ass2} holds for any ``reasonable'' $(1-\alpha_d)$-approximation dual ascent algorithm, and any algorithm that does not satisfy it can easily be modified to do so.  These assumptions are used only to prove the following claim, which is used in the proof of the lemma that follows. 
%The proof of the claim is omitted due to space restrictions. 

\begin{claim}\label{cl:oo}Let $\xsamp$ and $\ysamp=[\sphi, \spsi]$ be solutions of $\A$ to the sampled LP~\eqref{sampleLP2}. Then $\{x_j(\sphi)\}_{j \in [s]}$ and $\{\xsamp_j\}_{j \in [s]}$ differ on at most $m$ values of $j$.
\end{claim}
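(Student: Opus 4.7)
The plan is a case analysis on each $j \in [s]$ based on the sign of the reduced cost $c_j - \sum_{i=1}^{m} a_{ij}\sphi_i$, combining dual feasibility of LP~\eqref{sampleLP2} with the approximate complementary slackness conditions satisfied by $\A$ and with Assumptions~\ref{ass1}--\ref{ass2}. First, I would observe that dual feasibility of the sampled LP reads $\sum_i a_{ij}\sphi_i + \spsi_j \geq c_j$, while Assumption~\ref{ass2} (read in the natural way: $\spsi_j>0$ only if $\sum_i a_{ij}\sphi_i < c_j$) supplies the reverse direction. Together these give the two-way equivalence
\[ \spsi_j > 0 \iff \sum_{i=1}^{m} a_{ij}\sphi_i < c_j \iff x_j(\sphi) = 1. \]

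Next I would split $[s]$ into three groups according to the ordering of $\sum_i a_{ij}\sphi_i$ against $c_j$.
(i) If $\sum_i a_{ij}\sphi_i > c_j$, the allocation rule yields $x_j(\sphi)=0$ and the equivalence forces $\spsi_j=0$; the primal approximate CS condition with $\alpha_p=1$ (i.e.\ $\xsamp_j>0 \Rightarrow \sum_i a_{ij}\sphi_i + \spsi_j = c_j$) then rules out $\xsamp_j>0$, so $\xsamp_j=0=x_j(\sphi)$.
(ii) If $\sum_i a_{ij}\sphi_i < c_j$, then $x_j(\sphi)=1$ and $\spsi_j>0$, and dual approximate CS places $\xsamp_j$ in $[1/\alpha_d,1]$; in particular for $\alpha_d=1$ this gives $\xsamp_j=1=x_j(\sphi)$.
(iii) If $\sum_i a_{ij}\sphi_i = c_j$, Assumption~\ref{ass1} bounds the number of such $j$ by $m$. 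Groups (i) and (ii) therefore yield matches (at least when $\alpha_d=1$), while (iii) accounts for at most $m$ potential disagreements.

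The main obstacle I expect is closing group (ii) when $\alpha_d>1$: dual approximate CS pins $\xsamp_j$ only to $[1/\alpha_d,1]$, not to $1$ itself, so a priori many indices could have $x_j(\sphi)=1$ while $\xsamp_j \in [1/\alpha_d,1)$. To handle this I would invoke the structural property that the primal-dual / dual-ascent $(1,\alpha_d)$-approximation algorithms highlighted by the paper around Assumption~\ref{ass2} produce integral primal solutions $\xsamp\in\{0,1\}^s$; combined with the lower bound $\xsamp_j \geq 1/\alpha_d > 0$, this forces $\xsamp_j=1$ and restores the match. Assumption~\ref{ass1} then alone caps the total number of indices at which $\xsamp_j$ and $x_j(\sphi)$ disagree at $m$, completing the proof.
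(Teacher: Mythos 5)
Your proof is correct and takes essentially the same route as the paper's: partition $[s]$ by the sign of $c_j-\sum_{i}a_{ij}\sphi_i$ (equivalently, by whether $\spsi_j>0$), match the two strict cases using complementary slackness together with Assumption~\ref{ass2}, and charge the tight case $\sum_i a_{ij}\sphi_i=c_j$ to Assumption~\ref{ass1}. The one point of divergence is your case (ii): the paper simply asserts $\xsamp_j=1$ whenever $\spsi_j>0$ (implicitly invoking exact slackness on the box constraints $x_j\le 1$), whereas you correctly observe that approximate dual slackness yields only $\xsamp_j\ge 1/\alpha_d$ and close the gap with an integrality hypothesis on $\xsamp$ that the paper never states. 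That patch is reasonable but not strictly necessary: in the claim's sole application (Lemma~\ref{lem:shipra2}) only the one-sided bound $\sum_{j\in S}a_{ij}x_j(\sphi)\ge\sum_{j\in S}a_{ij}\xsamp_j-m$ is needed, and on the indices of your case (ii) one already has $x_j(\sphi)=1\ge\xsamp_j$, so those indices contribute in the right direction regardless of whether they count as disagreements.
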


\begin{proof} 
	For all $j \in [s]$, if $\spsi_j>0$ then $\xsamp_j=1$ by primary complementary slackness, and $x_j(\sphi)=1$  by definition of the allocation rule (recall that if  $\spsi_j>0$ then $\sum_j a_{ij}\sphi_i+\spsi_j=c_j$, by Assumption~\ref{ass2}). Therefore any difference between them must occur for $j$ such that  $\spsi_j=0$.   For all $\spsi_j =0$ such that $\left(c_j  -\sum_j a_{ij}\sphi_i\right) > 0$, it must hold that $\xsamp_j=0$ by complementary slackness, but then also $x_j(\sphi)=0$ by the allocation rule. Assumption \ref{ass1} then completes the proof.
\end{proof}

%\begin{proof} \adam{remove this proof for space} For all $j \in [s]$, if $\spsi_j>0$ then $\xsamp_j=1$ by primary complementary slackness, and $x_j(\sphi)=1$  by definition of the allocation rule (recall that if  $\spsi_j>0$ then $\sum_j a_{ij}\sphi_i+\spsi_j=c_j$, by Assumption~\ref{ass2}). Therefore any difference between them must occur for $j$ such that  $\spsi_j=0$.   For all $\spsi_j =0$ such that $\left(c_j  -\sum_j a_{ij}\sphi_i\right) > 0$, it must hold that $\xsamp_j=0$ by complementary slackness, but then also $x_j(\sphi)=0$ by the allocation rule. Assumption \ref{ass1} then completes the proof.
%\end{proof}
	
%\noindent Using this claim, we can bound the quality of the solution.

\begin{lemma}\label{lem:shipra2}
Let $\A$ be a $(1, \alpha_d)$-approximation algorithm for packing LPs, $\alpha_d \geq 1$. For any $\es>0$, $\ef \geq \sqrt{\frac{6(m+2)\log{n}}{\es B}}$, the solution Algorithm~\ref{alg1} gives to LP \eqref{LP1} is a $(1-3\ef)/\alpha_d^2$-approximation to the optimal solution with probability at least $1-\frac{1}{2n}$, where the probability is over the choice of samples.  
\end{lemma}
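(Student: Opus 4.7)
The plan is to chain four comparisons that move from $\mathrm{OPT}$ of LP~\eqref{LP1} to the value $\sum_{j\in[n]} c_j x_j(\sphi)$ returned by Algorithm~\ref{alg1}, losing a factor $1/\alpha_d$ twice (once from the scaling used to build a candidate sample-LP solution, once from the approximation ratio of $\A$) and a factor $(1-\ef)$ three times (once from that same scaling, and twice from Theorem~\ref{thm:hb}). The hypothesis $\ef\ge \sqrt{6(m+2)\log n/(\es B)}$ is calibrated so that every Hoeffding--Bernstein failure event has probability at most $1/(2n^{m+2})$, keeping the total failure probability below $1/(2n)$ after all union bounds. Rescale so that $c_{\max}\le 1$ (WLOG, as the approximation ratio is scale-invariant in $c$).

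\textbf{Step 1: sample-LP optimum vs.\ $\mathrm{OPT}$.} Let $x^*$ attain $\mathrm{OPT}$ and set $\tilde x_j=\tfrac{1-\ef}{\alpha_d}x^*_j$ on $S$. Feasibility of $\tilde x$ in LP~\eqref{sampleLP2} reduces to $\sum_{j\in S}a_{ij}x^*_j\le\es b_i$, which is a direct re-use of the Hoeffding--Bernstein calculation in~\eqref{ooo1}--\eqref{ooo2} applied to $r_{ij}=a_{ij}x^*_j$ and union-bounded over $i\in[m]$. A second application of Theorem~\ref{thm:hb} to $\sum_{j\in S}c_j x^*_j$ (whose mean is $\es\cdot\mathrm{OPT}$) gives $\sum_{j\in S}c_j x^*_j\ge(1-\ef)\es\cdot\mathrm{OPT}$ with high probability, so the sample-LP optimum is at least $\tfrac{(1-\ef)^2\es}{\alpha_d}\,\mathrm{OPT}$. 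Because the $(1,\alpha_d)$-approximation property implies $\A$ is an $\alpha_d$-approximation~\cite{nivbook},
\[
\textstyle\sum_{j\in S}c_j\xsamp_j \;\ge\; \tfrac{(1-\ef)^2\es}{\alpha_d^2}\,\mathrm{OPT}.
\]

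\textbf{Step 2: from $\xsamp$ to $x(\sphi)$, then from $S$ to $[n]$.} Claim~\ref{cl:oo} replaces $\xsamp$ by the thresholded vector $x(\sphi)$ at a cost of at most $m$ coordinates in $S$, each worth at most $c_{\max}\le 1$; the $B$-hypothesis makes $\mathrm{OPT}$ large enough that this $O(m)$ term is absorbed into the existing $\ef$-slack. To lift the resulting lower bound from $S$ to $[n]$, fix any price vector $\phi$ and apply Theorem~\ref{thm:hb} to $\sum_{j\in S}c_j x_j(\phi)$, whose mean over a uniformly random $S$ of size $\es n$ is $\es\sum_{j\in[n]}c_j x_j(\phi)$; this yields $\sum_{j\in[n]}c_j x_j(\phi)\ge(1-\ef)\,\es^{-1}\sum_{j\in S}c_j x_j(\phi)$ with the desired probability. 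The catch is that $\sphi$ depends on $S$, so the bound must hold uniformly over $\phi$. Exactly as in Lemma~\ref{lem:shipra1}, the combinatorial-geometry fact that the thresholding rule is a hyperplane separator of the $n$ points $\{(c_j,a_j)\}$ in $\mathbb{R}^m$ caps the number of distinct realizations of $R(\phi)$ at $n^m$, and union-bounding over them promotes the per-$\phi$ statement to one valid at $\phi=\sphi$. Chaining yields $\sum_{j\in[n]}c_j x_j(\sphi)\ge\tfrac{(1-\ef)^3}{\alpha_d^2}\,\mathrm{OPT}\ge\tfrac{1-3\ef}{\alpha_d^2}\,\mathrm{OPT}$.

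\textbf{Main obstacle.} The delicate point is decoupling $\sphi$ from $S$ in the sample-to-population step; this is what forces the introduction of Claim~\ref{cl:oo} and the $n^m$ union bound over price realizations, making the argument strictly more intricate than that of~\cite{Agrawal}, whose exact sample-LP solver leaves no gap between $\xsamp$ and $x(\sphi)$. Secondarily, tracking all three $(1-\ef)$-losses and the two $1/\alpha_d$-losses simultaneously, while ensuring every concentration failure probability is small enough to survive the $O(mn^m)$ union bounds, dictates the precise form of the lower bound on $\ef$ in the hypothesis; any weakening of that bound would break either feasibility (Step~1) or the uniform sample-to-population transfer (Step~2).
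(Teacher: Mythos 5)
Your proposal takes a genuinely different route from the paper --- a primal value-transfer argument (the sample-LP optimum is a large fraction of $\es\cdot\mathrm{OPT}$, the thresholded values agree with $\xsamp$ on all but $m$ coordinates of $S$ by Claim~\ref{cl:oo}, and the sampled objective concentrates around $\es$ times the full objective) --- whereas the paper never compares objective values across the sample boundary at all. Instead it constructs the auxiliary LP~\eqref{LP:b} whose right-hand side $\bnew$ is defined by the thresholded solution itself, observes that $x(\sphi)$ together with $[\sphi,\psi^*]$ satisfies \emph{exact} complementary slackness and is therefore optimal for LP~\eqref{LP:b}, and then shows via row-wise concentration that $\frac{1-3\ef}{\alpha_d^2}x^*$ is feasible for LP~\eqref{LP:b}. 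This difference is not cosmetic: it is what lets the paper get away with a hypothesis only on $B=\min_i b_i$.

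The genuine gap in your argument is that neither of your objective-concentration steps is justified by the lemma's hypotheses. In Step~1 you apply Theorem~\ref{thm:hb} to $\sum_{j\in S}c_jx^*_j$ to obtain a multiplicative $(1-\ef)$ loss, and in Step~2 you apply it to $\sum_{j\in S}c_jx_j(\phi)$, union-bounded over the $n^m$ price realizations, to lift from $S$ to $[n]$. For a \emph{multiplicative} error $\ef$ to survive such a union bound, the relevant population total $\sum_{j\in[n]}c_jr_j$ must be of order at least $\frac{(m+2)\log n}{\es\ef^2}$ (after your normalization $c_{\max}\le 1$). The hypothesis $\ef\ge\sqrt{6(m+2)\log n/(\es B)}$ guarantees this scale only for the \emph{constraint} rows, which are compared against $b_i\ge B$; it says nothing about $\mathrm{OPT}$ or $\sum_{j}c_jx_j(\phi)$, which can be arbitrarily small relative to $B$ (e.g.\ when only a handful of the $c_j$ are nonzero, the sampled objective carries essentially no information and no relative concentration holds). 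A secondary flaw in Step~1: feasibility of $\tilde x=\frac{1-\ef}{\alpha_d}x^*|_S$ in LP~\eqref{sampleLP2} requires $\sum_{j\in S}a_{ij}x^*_j\le\es b_i$, but the mean of the left-hand side can equal $\es b_i$ exactly when the $i$-th constraint is tight at $x^*$, so concentration only yields $\le(1+\ef)\es b_i$ with high probability; you would need an additional factor $(1+\ef)^{-1}$ in the scaling, further eroding the claimed $(1-3\ef)$ constant. Repairing the argument essentially forces you back to the paper's dual-certificate construction (or to adding an explicit lower bound on $\mathrm{OPT}$ as a hypothesis).
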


\begin{proof}
Denote the primal and dual solutions to the sampled LP in~\eqref{sampleLP2} of Algorithm~\ref{alg1} by $\xsamp$, $\ysamp=[\sphi,\spsi]$.  For purposes of the proof, we construct the following related LP.  
\begin{align} \label{LP:b}
\maximize \quad & \textstyle \sum_{j=1} ^{n} c_j x_j &  \\
\text{subject to} \quad & \textstyle \sum_{j=1} ^{n} a_{ij} x_j  \le \bnew_i & i \in [m] \notag \\
& x_j \in [0,1]  & j \in [n], \notag
\end{align}
where 
\begin{align*}
\bnew_i = \begin{cases}
\sum_{j=1}^n a_{ij}x_j(\sphi) & \text{if } \sphi_i>0\\
\max\{\sum_{j=1}^n a_{ij}x_j(\sphi),b_i\} &\text{if } \sphi_i=0
\end{cases}
\end{align*}Note that $\bnew$ has been set to guarantee that the LP is always feasible, and that $x(\sphi)$ and $y^*=[\sphi,\psi^*]$ satisfy the (exact) complementary slackness conditions, where $\psi^*_j = c_j - \sum_{i=1}^{m} a_{ij}$ if $x_j(\sphi)= 1$, and $\psi^*_j = 0$ if $x_j(\sphi)\neq 1$.
% More specifically, (i) $\sphi_i>0$ implies that the $i$-th  constraint is tight, i.e., $\sum_{j=1}^n a_{ij}x_j(\sphi) = \bnew_i$; and (ii)  $x_j(\phi) \in\{0,1\}$, hence $x_j(\sphi)>0$ implies $x_j(\sphi)=1$ \pl{the statement $x_j(\sphi)>0$ implies $x_j(\sphi)=1$ looks funny by itself; we have to make it clear that it's a result of using the allocation rule}. By the allocation rule, this implies that $\sum_{i=1}^{m} a_{ij} \sphi_i <  c_j$, and so the dual is infeasible without adding $\psi^*$. 
In particular, note that  $\psi^*$ preserves the exact complementary slackness condition, as $\psi^*_j$ is set to zero when $x_j(\sphi)\neq 1$. Therefore $x(\sphi)$  and $y^*=[\sphi,\psi^*]$ are optimal solutions to LP~\eqref{LP:b}.

A consequence of the approximate dual complementary slackness condition for the solution $\xsamp, \ysamp$ is that the $i$-th primal constraint of LP~\eqref{sampleLP2} is almost tight when $\sphi_i>0$ :  $$\sum_{j \in S} a_{ij}\xsamp_j \geq \frac{(1-\ef)\es}{(\alpha_d)^2} b_i.$$
This allows us to bound $\sum_{j \in S} a_{ij}x_j(\sphi)$ as follows.
$$\sum_{j \in S} a_{ij}x_j(\sphi) \geq \sum_{j \in S} a_{ij}\xsamp_j -m \geq \frac{(1-2\ef)\es}{(\alpha_d)^2} b_i,$$
where the first inequality follows from Claim~\ref{cl:oo} and the second follows from the fact that $B \geq \frac{m(\alpha_d)^2}{\ef \es}$. Thus:
\begin{align*}
&\Pr\left[ \sum_{j \in [s]} r_{ij} \geq \frac{(1-2\ef)\es}{(\alpha_d)^2} b_i \wedge \sum_{j \in [n]}  r_{ij} < \frac{1-3\ef}{(\alpha_d)^2} b_i \right] \\
&\leq \Pr\left[ |X - \expect{X}| >
\frac{\ef\es}{(\alpha_d)^2} b_i\right]\\
& \leq 2\exp\left( -\frac{\ef^2 \es b_i}{2(\alpha_d)^4 +(\alpha_d)^2\ef}\right)
\leq \frac{1}{2n^{m+2}}.
\end{align*}
In the final step, we take $\alpha_d$ close to one, i.e., we assume $3 \geq 2(\alpha_d)^4 +(\alpha_d)^2\ef$. The constant $6$ in the lemma can be adjusted if application for larger $\alpha_d$ is desired. 
%\pl{add as foot note: The constant $6$ can be adjusted for various values of $\alpha_d > 1$. In Lemma (\ref{lem:shipra2}) we assume $3 \geq 2(\alpha_d)^4 +(\alpha_d)^2\ef$}
%\pl{but $\alpha_d\geq 1$ by definition. I think we still want to assume that $\alpha_d$ is not that much larger than 1, but maybe we need to write that down explicitly in order to have the $6$ cancel out the $2\alpha_d ^4 + $\alpha_d^2 \ef$} 

Applying the union bound gives that, with probability at least $1-\frac{1}{2n}$,
%\hy{Shai: please check here, the probability is not 1-1/2n}\sv{can you fix it so that it's 1/2n?}\hy{to fix it, the condition on $\ef$ should be changed to $\ef \geq \sqrt{\frac{3\log{(4n^{m+2})}}{\es B}}$, or just $\ef \geq \sqrt{\frac{6(m+2)\log{n}}{\es B}}$, which is looser but look nicer}
it holds that $\bnew_i \geq \sum_{j \in [n]} r_{ij} \geq \frac{(1-3\ef)}{(\alpha_d)^2} b_i$.  It follows that, if $x^*$ is an optimal solution to $\lp$, then $\frac{(1-3\ef)}{(\alpha_d)^2}x^*$ is a feasible solution to LP~\eqref{LP:b}.  Thus, the optimal value of LP~\eqref{LP:b} is at least $\frac{(1-3\ef)}{(\alpha_d)^2}\sum_{j=1}^n c_jx^*_j$. 
\end{proof}

%-------------------------------------------------------------------
\bibliographystyle{aaai} %\bibliographystyle{ieee} 
\bibliography{_aaai_lp}

\begin{thebibliography}{}

\bibitem[\protect\citeauthoryear{Agrawal, Klein, and Ravi}{1995}]{ajit}
Agrawal, A.; Klein, P.; and Ravi, R.
\newblock 1995.
\newblock When trees collide: An approximation algorithm for the generalized
  steiner problem on networks.
\newblock {\em SIAM J. on Comp.} 24(3):440--456.

\bibitem[\protect\citeauthoryear{Agrawal, Wang, and Ye}{2014}]{Agrawal}
Agrawal, S.; Wang, Z.; and Ye, Y.
\newblock 2014.
\newblock A dynamic near-optimal algorithm for online linear programming.
\newblock {\em Oper. Res.} 62(4):876--890.

\bibitem[\protect\citeauthoryear{{Allen-Zhu} and
  Orecchia}{2015}]{AllenOrecchia2015}
{Allen-Zhu}, Z., and Orecchia, L.
\newblock 2015.
\newblock Using optimization to break the epsilon barrier: A faster and simpler
  width-independent algorithm for solving positive linear programs in parallel.
\newblock In {\em Proc. of SODA},  1439--1456.

\bibitem[\protect\citeauthoryear{Ananthanarayanan \bgroup et al\mbox.\egroup
  }{2010}]{ananthanarayanan2010reining}
Ananthanarayanan, G.; Kandula, S.; Greenberg, A.~G.; Stoica, I.; Lu, Y.; Saha,
  B.; and Harris, E.
\newblock 2010.
\newblock Reining in the outliers in map-reduce clusters using mantri.
\newblock In {\em Proc. of OSDI}.

\bibitem[\protect\citeauthoryear{Ananthanarayanan \bgroup et al\mbox.\egroup
  }{2013}]{Dolly13}
Ananthanarayanan, G.; Ghodsi, A.; Shenker, S.; and Stoica, I.
\newblock 2013.
\newblock Effective straggler mitigation: Attack of the clones.
\newblock In {\em Proc. of NSDI},  185--198.

\bibitem[\protect\citeauthoryear{Ananthanarayanan \bgroup et al\mbox.\egroup
  }{2014}]{Grass14}
Ananthanarayanan, G.; Hung, M. C.-C.; Ren, X.; Stoica, I.; Wierman, A.; and Yu,
  M.
\newblock 2014.
\newblock Grass: Trimming stragglers in approximation analytics.
\newblock In {\em Proc. of NSDI},  289--302.

\bibitem[\protect\citeauthoryear{Awerbuch and Khandekar}{2008}]{ak08}
Awerbuch, B., and Khandekar, R.
\newblock 2008.
\newblock Stateless distributed gradient descent for positive linear programs.
\newblock In {\em Proc. of STOC}, STOC '08,  691.

\bibitem[\protect\citeauthoryear{Balakrishnan, Magnanti, and Wong}{1989}]{won}
Balakrishnan, A.; Magnanti, T.~L.; and Wong, R.~T.
\newblock 1989.
\newblock A dual-ascent procedure for large-scale uncapacitated network design.
\newblock {\em Oper. Res.} 37(5):716--740.

\bibitem[\protect\citeauthoryear{Bar-Yehuda and Even}{1981}]{bye}
Bar-Yehuda, R., and Even, S.
\newblock 1981.
\newblock A linear-time approximation algorithm for the weighted vertex cover
  problem.
\newblock {\em J. of Algs.} 2(2):198 -- 203.

\bibitem[\protect\citeauthoryear{Bartal, Byers, and Raz}{2004}]{bbr04}
Bartal, Y.; Byers, J.~W.; and Raz, D.
\newblock 2004.
\newblock Fast distributed approximation algorithms for positive linear
  programming with applications to flow control.
\newblock {\em SIAM J. on Comp.} 33(6):1261--1279.

\bibitem[\protect\citeauthoryear{Bertsimas and Vohra}{1998}]{bv94}
Bertsimas, D., and Vohra, R.
\newblock 1998.
\newblock Rounding algorithms for covering problems.
\newblock {\em Math. Prog.} 80(1):63--89.

\bibitem[\protect\citeauthoryear{Bienstock and Iyengar}{2006}]{Bienstock06}
Bienstock, D., and Iyengar, G.
\newblock 2006.
\newblock Approximating fractional packings and coverings in o(1/epsilon)
  iterations.
\newblock {\em SIAM J. Comp.} 35(4):825--854.

\bibitem[\protect\citeauthoryear{Boyd and Vandenberghe}{2004}]{Boydbook}
Boyd, S., and Vandenberghe, L.
\newblock 2004.
\newblock {\em Convex Optimization}.
\newblock Cambridge University Press.

\bibitem[\protect\citeauthoryear{Buchbinder and Naor}{2009}]{nivbook}
Buchbinder, N., and Naor, J.
\newblock 2009.
\newblock The design of competitive online algorithms via a primal-dual
  approach.
\newblock {\em Found. and Trends in Theoretical Computer Science}
  3(2-3):93--263.

\bibitem[\protect\citeauthoryear{Burger \bgroup et al\mbox.\egroup
  }{2012}]{bgfa12}
Burger, M.; Notarstefano, G.; Bullo, F.; and Allgower, F.
\newblock 2012.
\newblock A distributed simplex algorithm for degenerate linear programs and
  multi-agent assignment.
\newblock {\em Automatica} 48(9):2298--2304.

\bibitem[\protect\citeauthoryear{Byers and Nasser}{2000}]{bn00}
Byers, J., and Nasser, G.
\newblock 2000.
\newblock Utility-based decision-making in wireless sensor networks.
\newblock In {\em Mobile and Ad Hoc Networking and Comp.},  143--144.

\bibitem[\protect\citeauthoryear{Candes and Plan}{2011}]{candes2011tight}
Candes, E., and Plan, Y.
\newblock 2011.
\newblock Tight oracle inequalities for low-rank matrix recovery from a minimal
  number of noisy random measurements.
\newblock {\em IEEE Trans. on Info. Theory} 57(4):2342--2359.

\bibitem[\protect\citeauthoryear{Candes, Romberg, and Tao}{2006}]{Candes06}
Candes, E.; Romberg, J.; and Tao, T.
\newblock 2006.
\newblock Robust uncertainty principles: Exact signal reconstruction from
  highly incomplete frequency information.
\newblock {\em IEEE Trans. Inform. Theory} 52(2):489 -- 509.

\bibitem[\protect\citeauthoryear{Devanur and Hayes}{2009}]{DH09}
Devanur, N.~R., and Hayes, T.~P.
\newblock 2009.
\newblock The adwords problem: online keyword matching with budgeted bidders
  under random permutations.
\newblock In {\em Proc. of EC},  71--78.

\bibitem[\protect\citeauthoryear{Donoho and Tanner}{2005}]{Donoho05}
Donoho, D.~L., and Tanner, J.
\newblock 2005.
\newblock Sparse nonnegative solution of underdetermined linear equations by
  linear programming.
\newblock In {\em Proc. of the National Academy of Sciences of the USA},
  9446--9451.

\bibitem[\protect\citeauthoryear{Donoho}{2006}]{Donoho06compressedsensing}
Donoho, D.~L.
\newblock 2006.
\newblock Compressed sensing.
\newblock {\em IEEE Trans. Inform. Theory} 52:1289--1306.

\bibitem[\protect\citeauthoryear{Erlenkotter}{1978}]{erl}
Erlenkotter, D.
\newblock 1978.
\newblock A dual-based procedure for uncapacitated facility location.
\newblock {\em Oper. Res.} 26(6):992--1009.

\bibitem[\protect\citeauthoryear{Goemans and Williamson}{1995}]{gw}
Goemans, M.~X., and Williamson, D.~P.
\newblock 1995.
\newblock A general approximation technique for constrained forest problems.
\newblock {\em SIAM J. on Comp.} 24(2):296--317.

\bibitem[\protect\citeauthoryear{Leskovec and Krevl}{2014}]{snap}
Leskovec, J., and Krevl, A.
\newblock 2014.
\newblock {SNAP Datasets}: {Stanford} large network dataset collection.
\newblock \url{http://snap.stanford.edu/data}.

\bibitem[\protect\citeauthoryear{London \bgroup et al\mbox.\egroup
  }{2017}]{LOCO}
London, P.; Chen, N.; Vardi, S.; and Wierman, A.
\newblock 2017.
\newblock Distributed optimization via local computation algorithms.
\newblock http://users.cms.caltech.edu/~plondon/loco.pdf.

\bibitem[\protect\citeauthoryear{Luby and Nisan}{1993}]{Luby}
Luby, M., and Nisan, N.
\newblock 1993.
\newblock A parallel approximation algorithm for positive linear programming.
\newblock In {\em Proc. of STOC},  448--457.

\bibitem[\protect\citeauthoryear{Mansour \bgroup et al\mbox.\egroup
  }{2012}]{MansourRVX12}
Mansour, Y.; Rubinstein, A.; Vardi, S.; and Xie, N.
\newblock 2012.
\newblock Converting online algorithms to local computation algorithms.
\newblock In {\em Proc. of ICALP},  653--664.

\bibitem[\protect\citeauthoryear{Mohan \bgroup et al\mbox.\egroup
  }{2014}]{Mohan_etal15}
Mohan, K.; London, P.; Fazel, M.; Witten, D.; and Lee, S.-I.
\newblock 2014.
\newblock Node-based learning of multiple gaussian graphical models.
\newblock {\em JMLR} 15:445--488.

\bibitem[\protect\citeauthoryear{Nesterov}{2005}]{Nesterov05}
Nesterov, Y.
\newblock 2005.
\newblock Smooth minimization of non-smooth functions.
\newblock {\em Math. Prog.} 103(1):127--152.

\bibitem[\protect\citeauthoryear{Notarstefano and Bullo}{2011}]{nb11}
Notarstefano, G., and Bullo, F.
\newblock 2011.
\newblock Distributed abstract optimization via constraints consensus: Theory
  and applications.
\newblock {\em IEEE Trans. Autom. Control} 56(10):2247--2261.

\bibitem[\protect\citeauthoryear{Orlik and Terao}{1992}]{cg}
Orlik, P., and Terao, H.
\newblock 1992.
\newblock {\em Arrangements of Hyperplanes}.
\newblock Grundlehren der mathematischen Wissenschaften. Springer-Verlag Berlin
  Heidelberg.

\bibitem[\protect\citeauthoryear{Plotkin, Shmoys, and Tardos}{1995}]{Tardos95}
Plotkin, S.~A.; Shmoys, D.~B.; and Tardos, E.
\newblock 1995.
\newblock Fast approximation algorithms for fractional packing and covering
  problems.
\newblock {\em Math. of Oper. Res.} 20(2):257--301.

\bibitem[\protect\citeauthoryear{Ravikumar, Agarwal, and
  Wainwright}{2010}]{raw10}
Ravikumar, P.; Agarwal, A.; and Wainwright, M.~J.
\newblock 2010.
\newblock Message passing for graph-structured linear programs: Proximal
  methods and rounding schemes.
\newblock {\em JMLR} 11:1043--1080.

\bibitem[\protect\citeauthoryear{Recht, Fazel, and
  Parrilo}{2010}]{RechtFazelParrilo06}
Recht, B.; Fazel, M.; and Parrilo, P.~A.
\newblock 2010.
\newblock Guaranteed minimum-rank solutions of linear matrix equations via
  nuclear norm minimization.
\newblock {\em SIAM Review} 52(3):471--501.

\bibitem[\protect\citeauthoryear{Ren \bgroup et al\mbox.\egroup
  }{2015}]{Ren_Hopper15}
Ren, X.; Ananthanarayanan, G.; Wierman, A.; and Yu, M.
\newblock 2015.
\newblock Hopper: Decentralized speculation-aware cluster scheduling at scale.
\newblock In {\em Proc. of SIGCOMM}.

\bibitem[\protect\citeauthoryear{Richert and Cort{\'e}s}{2015}]{rc15}
Richert, D., and Cort{\'e}s, J.
\newblock 2015.
\newblock Robust distributed linear programming.
\newblock {\em Trans. Autom. Control} 60(10):2567--2582.

\bibitem[\protect\citeauthoryear{Riquelme, Johari, and
  Zhang}{2017}]{Baosen2016}
Riquelme, C.; Johari, R.; and Zhang, B.
\newblock 2017.
\newblock Online active linear regression via thresholding.
\newblock In {\em Proc. of AAAI}.

\bibitem[\protect\citeauthoryear{Sanghavi, Malioutov, and
  Willsky}{2008}]{NIPS_smw08}
Sanghavi, S.; Malioutov, D.; and Willsky, A.~S.
\newblock 2008.
\newblock Linear programming analysis of loopy belief propagation for weighted
  matching.
\newblock In {\em Proc. of NIPS},  1273--1280.

\bibitem[\protect\citeauthoryear{Sridhar \bgroup et al\mbox.\egroup
  }{2013}]{S13}
Sridhar, S.; Wright, S.~J.; R{\'{e}}, C.; Liu, J.; Bittorf, V.; and Zhang, C.
\newblock 2013.
\newblock An approximate, efficient {LP} solver for {LP} rounding.
\newblock In {\em Proc. of NIPS},  2895--2903.

\bibitem[\protect\citeauthoryear{Taskar, Chatalbashev, and
  Koller}{2004}]{Taskar2004}
Taskar, B.; Chatalbashev, V.; and Koller, D.
\newblock 2004.
\newblock Learning associative markov networks.
\newblock In {\em Proc. of ICML}.

\bibitem[\protect\citeauthoryear{Trevisan}{1998}]{Trevisan98}
Trevisan, L.
\newblock 1998.
\newblock Parallel approximation algorithms by positive linear programming.
\newblock {\em Algorithmica} 21(1):72--88.

\bibitem[\protect\citeauthoryear{van~der Vaart and Wellner}{1996}]{aw}
van~der Vaart, A., and Wellner, J.
\newblock 1996.
\newblock {\em Weak Convergence and Empirical Processes With Applications to
  Statistics}.
\newblock Springer Series in Statistics. Springer-Verlag New York.

\bibitem[\protect\citeauthoryear{Woodruff}{2014}]{woodruff14}
Woodruff, D.~P.
\newblock 2014.
\newblock Sketching as a tool for numerical linear algebra.
\newblock {\em Found. and Trends in Theoretical Computer Science}
  10(1-2):1--157.

\bibitem[\protect\citeauthoryear{Yarmish and Slyke}{2009}]{ys09}
Yarmish, G., and Slyke, R.
\newblock 2009.
\newblock A distributed, scalable simplex method.
\newblock {\em J. of Supercomputing} 49(3):373--381.

\bibitem[\protect\citeauthoryear{Young}{2001}]{young01}
Young, N.~E.
\newblock 2001.
\newblock Sequential and parallel algorithms for mixed packing and covering.
\newblock In {\em Proc. of FOCS},  538--546.

\bibitem[\protect\citeauthoryear{Yuan and Lin}{2007a}]{Yuan07}
Yuan, M., and Lin, Y.
\newblock 2007a.
\newblock Model selection and estimation in the gaussian graphical model.
\newblock {\em Biometrika} 94(10):19--35.

\bibitem[\protect\citeauthoryear{Zurel and Nisan}{2001}]{zn01}
Zurel, E., and Nisan, N.
\newblock 2001.
\newblock An efficient approximate allocation algorithm for combinatorial
  auctions.
\newblock In {\em Proc. of EC}.

\end{thebibliography}
%-------------------------------------------------------------------
%\newpage
%\input{sections/7_appendix}
%-------------------------------------------------------------------

\end{document}